\title{ On the minimizers  of energy forms with\\ completely monotone  kernel}
\author{ 
	Alexander Schied\thanks{
		Department of Statistics and Actuarial Science, University of Waterloo. E-mail: {\tt aschied@uwaterloo.ca}
	}\and\setcounter{footnote}{6} Elias Strehle\thanks{
		Department of Mathematics, University of Mannheim. E-mail: {\tt elias@strehle.de} \hfill\break
		The authors gratefully acknowledge financial support by Deutsche Forschungsgemeinschaft  through Research Grant SCHI 500/3-2. A.S.~also acknowledges partial support  from the
 Natural Sciences and Engineering Research Council of Canada through grant RGPIN-2017-04054}
        }
        \date{\normalsize  First version: June 15, 2017\\
        \normalsize This version: August 14, 2018\\$ $\\
        {\sl\normalsize This paper is dedicated to Jim Gatheral on the occasion of his 60th birthday.}}
\newtheorem{theorem}{Theorem}
\newtheorem{lemma}[theorem]{Lemma}
\newtheorem{proposition}[theorem]{Proposition}
\theoremstyle{definition}
\newtheorem{definition}[theorem]{Definition}
\newtheorem{remark}[theorem]{Remark}
\newtheorem{example}[theorem]{Example}
\DeclareMathOperator{\sgn}{sgn}
\DeclareMathOperator{\diag}{diag}
\def\Ind#1{{\mathbbmss 1}_{_{\scriptstyle #1}}}
\definecolor{DarkGreen}{rgb}{0.2,0.6,0.2}
\newcommand{\one}{{\bm 1}}
\renewcommand{\phi}{\varphi}
\begin{document}
\newpage
\maketitle

\vskip-2cm
\begin{abstract}
Motivated  by the problem of optimal portfolio liquidation under transient price impact, we study the minimization of energy functionals with completely monotone displacement kernel under an integral constraint. The corresponding minimizers can be characterized by Fredholm integral equations of the second type with constant free term.  Our main result states that minimizers are analytic and have a power series development in terms of even powers of the distance to the midpoint of the domain of definition and with nonnegative coefficients. We show moreover that our minimization problem is equivalent to the minimization of the energy functional under a nonnegativity constraint. 
 \end{abstract}

\emph{MSC 2010:} 49K21, 49N60, 45B05,  31C15,  26E05, 26A51, 26A48, 91G80\\
\emph{Keywords:} energy form, capacitary measure, Fredholm integral equation of the second kind, symmetrically totally monotone function, optimal portfolio liquidation

\section{Introduction and problem formulation}\label{Intro section}

In this paper, we study the minimization of energy functionals of the form
\begin{equation}\label{energy functional eq}
J_\gamma[\phi]=\frac\gamma2\int_0^T\phi(t)^2\dif t+\frac12\int_0^T\int_0^TG(|t-s|)\phi(s)\phi(t)\dif s\dif t,\qquad \phi\in L^2[0,T],
\end{equation}
where $\gamma\ge0$, $T>0$, and $G:(0,\infty)\to[0,\infty)$ is a continuous and nonconstant  function satisfying 
\begin{equation}\label{eq: weakly singular G def}
\int_0^TG(t)\dif t<\infty\qquad\text{for all $T>0$.}
\end{equation}
 Problems of this type have a long history. An early reference is \cite{Hilbert}, where the minimization and maximization of $J_0[\phi]$ is studied under the constraint $\int_0^T\phi(t)^2\,dt=1$ if  $G$ is of positive type in the sense that
\begin{equation}\label{G of positive type}
\frac12\int_0^T\int_0^TG(|t-s|)\phi(t)\phi(s)\dif t\dif s\ge0\quad\text{for all $\phi\in L^2[0,T]$ and all $T>0$.}
\end{equation}
  In potential theory, one usually takes $\gamma=0$ and considers the minimization of
$$J_0[\mu]=\frac12\int\int G(|t-s|)\,\mu(ds)\,\mu(dt),
$$
over Borel probability measures $\mu$ supported on a given compact set $K\subset[0,T]$. If a minimizing measure $\mu^*$ exists, it is the capacitary measure for $K$, and $1/J_0[\mu^*]$ is the capacity of $K$; see, e.g., \cite{choquet}.  Note that the requirement that $\mu$ is a probability measure corresponds to the infinitely many convex constraints   $\mu(K)=1$ and $\mu(A)\ge0$ for every Borel set $A\subset K$. 
It was proved in \cite{Gatheral2012} that, for convex and nonincreasing  $G$, the latter constrained minimization problem  can be replaced by the much simpler minimization of $J_0[\mu]$ over all finite signed Borel measures  $\mu$ on $K$ that have finite total variation and satisfy the single linear constraint $\mu(K)=1$. This observation enabled in particular an approach to compute $\mu^*$ for $K=[0,T]$ by means of singular control \citep{Alfonsi2013}. Here, we will instead exploit the fact that, for $\gamma>0$, minimizers of $J_\gamma[\phi]$ under the constraint $\int_0^T\phi(t)\dif t =1$ can be characterized as the solution of the following Fredholm integral equation of second kind,
\begin{equation}\label{Fredholm eq}
\gamma\phi(t)+\int_0^TG(|t-s|)\phi(s)\dif s=\sigma\qquad\text{for a.e.~$t\in[0,T]$,}
\end{equation}
where the constant $\sigma$ is equal to the minimal energy (see Proposition~\ref{Fredholm prop}).

In this paper, we focus  on the qualitative properties of minimizers. For instance, explicit computations or numerical simulations reveal that minimizers of $J_\gamma$ are often convex functions of $t\in[0,T]$ with a minimum at $T/2$. In addition, it is easy to see that every solution $\phi$ must be symmetric around $T/2$, i.e., $\phi(t)=\phi(T-t)$. These two facts are reminiscent of the celebrated Riesz rearrangement inequality, which states that for decreasing $G$, 
\begin{equation}\label{eq:Riesz}
\int_0^T\int_0^TG(|t-s|)f(s)g(t)\dif s\dif t\le \int_0^T\int_0^TG(|t-s|)f^*(s)g^*(t)\dif s\dif t,
\end{equation}
where $f^*$ and $g^*$ are the symmetric decreasing rearrangements of the nonnegative functions $f$ and $g$; see \cite{Riesz}. Although a lower bound in \eqref{eq:Riesz} is generally not available, it would be tempting to conjecture that minimizers of $J_\gamma$ are equal to their symmetric increasing rearrangements. This conjecture, however, cannot be true in general since the choice $G(t)=(1- t)^+$  provides a counterexample; see  Example~\ref{capped linear example} and Figure 1. So the following question arises:
\begin{equation}
\begin{split}
&\text{\emph{For which kernels $G$ is the minimizer $\phi$, respectively the solution of \eqref{Fredholm eq},}}\\
&\text{\emph{convex with a minimum at $T/2$?}}
\end{split}\tag{$*$}
\end{equation}
Our main result shows that this is the case whenever $G$ is completely monotone. As a matter of fact, we will actually prove a much stronger result: If $G$ is completely monotone, then $\phi$ is \emph{symmetrically totally monotone} in the sense that it is analytic in $(0,T)$ and its power series development around $T/2$ is of the form  $\phi(t)=\sum_{n=0}^\infty a_{2n}(t-T/2)^{2n}$  for coefficients $a_{2n}\ge0$.

Problems such as the minimization of $J_\gamma$ or the solution of Fredholm integral equations \eqref{Fredholm eq} have a large number of applications, for instance in machine learning; see, e.g., \cite{Chen2002}.  \cite{Gatheral2012} and \cite{Alfonsi2013}, on the other hand, were motivated by the problem of optimal portfolio liquidation in financial markets. There, the solution $\phi$ corresponds to an optimal trading rate for liquidating a large initial position of shares during the time interval $[0,T]$. Since the position is large, its liquidation affects asset prices in an unfavorable way, which creates additional execution costs. The temporal evolution of this price impact can be described by means of a kernel $G$, for which some empirical studies suggest a behavior of the form $G(t)\sim t^{-\alpha}$ for some $\alpha\in(0,1)$; see, e.g., \cite{Gatheral2010}. 
Assumption \eqref{G of positive type} is reasonable in this context: it excludes the existence of price manipulation strategies that generate profit through their own price impact \citep{Huberman2004, Gatheral2010}.
The term$\frac\gamma2\int_0^T\varphi(t)^2\dif t$ can be interpreted as costs arising from \lq slippage\rq\ or temporary price impact as in \cite{Almgren}. In this  financial context, the question $(*)$ was asked by J.~Gatheral, and the possible convexity of the optimal portfolio liquidation strategy $\phi$ has the practical significance that it matches the empirically observed U-shape of the daily distribution of market liquidity. That is, if the answer to $(*)$ is \lq Yes\rq\ and the liquidation horizon is one trading day, as it  is often the case, then the optimal liquidation strategy $\phi$ involves fast trading toward the beginning and end of the trading day when liquidity is high and slower trading when liquidity is low. 

This paper is organized as follows. In Section~\ref{results section}, we present our main results and a few explicit examples. All proofs are given in Section~\ref{proofs section}.

\section{Main results}\label{results section}

For simplicity, we will assume henceforth that  
\begin{equation}\label{G assumption}
G:(0,\infty)\to[0,\infty)\quad\text{is  continuous, nonincreasing, nonconstant, and satisfies \eqref{eq: weakly singular G def}.}
\end{equation}
For $\gamma> 0$ and $T>0$, we consider the following variational problem,
\begin{equation}\label{eq: J gamma}
\text{minimize}\quad J_\gamma[\phi]=\frac\gamma2\int_0^T\phi(t)^2\dif t+\frac12\int_0^T\int_0^TG(|t-s|)\phi(t)\phi(s)\dif t\dif s\quad\text{over $\phi\in\Phi_1$,}
\end{equation}
where $\Phi_1$ consists of all  functions $\phi\in L^2[0,T]$ that satisfy the linear constraint $\int_0^T\phi(t)\dif t=1$ and for which the double integral on the right is well-defined and finite. For $\gamma=0$ we consider the following problem,
\begin{equation}\label{eq: J 0}
\text{minimize}\quad J_0[\mu]=\frac12\int_{[0,T]}\int_{[0,T]} G(|t-s|)\,\mu(dt)\,\mu(ds)\quad\text{over $\mu\in\Phi_0$,}
\end{equation}
where we put $G(0):=G(0+)\in(0,\infty]$ and  where $\Phi_0$ consists of all  signed Borel measures $\mu$ on $[0,T]$ that satisfy $\mu([0,T])=1$ and whose total variation measure $|\mu|$ is finite and such that 
$$\int_{[0,T]}\int_{[0,T]} G(|t-s|)\,|\mu|(dt)\,|\mu|(ds)<\infty.
$$

For $\gamma>0$, standard Hilbert space arguments easily yield  the existence and uniqueness of minimizers to \eqref{eq: J gamma} if $G$ satisfies \eqref{G of positive type}. For $\gamma=0$, however, the existence of a minimizer  for \eqref{eq: J 0} is nontrivial even if $G$ is bounded and satisfies \eqref{G of positive type}. Indeed, it was shown in  \cite{Gatheral2012} that minimizers do not exist  for a large class of kernels for which $G(|\cdot|)$ is analytic, such as for $G(t)=e^{-t^2}$ or $G(t)=1/(1+t^2)$, despite the fact that these kernels are of positive type \eqref{G of positive type}. But it was shown in Theorem 2.24 of \cite{Gatheral2012} that \eqref{eq: J 0} admits a unique minimizer $\mu^*\in\Phi_0$ provided that $G$  is convex and satisfies \eqref{G assumption}. It was shown moreover that convexity guarantees that $\mu^*$ is a probability measure. The following proposition extends this latter result to the case $\gamma>0$. Its proof also provides an alternative proof for the existence of minimizers of  \eqref{eq: J 0}. Note that every convex, nonincreasing, and nonnegative function $G$ is of positive type in the sense of \eqref{G of positive type} due to Equation \eqref{eq: Jgamma Fourier rep} below.\footnote{This fact relies on the well-known result that $G(|\cdot|)$ is positive definite in the sense of Bochner  for every bounded, convex, and nonincreasing function $G:[0,\infty)\to[0,\infty)$. This latter result is often attributed to \cite{Polya49}, although it is also an easy consequence of \cite{Young}.}

\begin{proposition}\label{GSS prop}Suppose that $\gamma>0$, $T>0$, and  $G$ satisfies \eqref{G assumption}. If  $G$ is convex on  $(0,T]$, then the unique minimizer of \eqref{eq: J gamma} is a probability density.
\end{proposition}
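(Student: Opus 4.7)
My plan is to prove the result in three steps: first establish existence, uniqueness and the Fredholm characterization of the minimizer, then deduce nonnegativity from convexity of $G$ via a comparison argument.

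\textbf{Existence, uniqueness, and Fredholm characterization.} By the footnote, a convex, nonincreasing $G\colon(0,\infty)\to[0,\infty)$ yields a positive-type kernel $G(|\cdot|)$, so the nonlocal part of $J_\gamma$ is a positive semi-definite quadratic form on $L^2[0,T]$. Combined with the $\gamma>0$ term, $J_\gamma$ is strictly convex and coercive on $L^2[0,T]$. Since $\Phi_1$ is a nonempty closed affine subspace, the direct method of the calculus of variations yields a unique minimizer $\phi^*\in\Phi_1$, and by the Lagrange multiplier theorem (Proposition~\ref{Fredholm prop}) $\phi^*$ satisfies the Fredholm equation \eqref{Fredholm eq} for some $\sigma\in\mathbb{R}$.

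\textbf{Nonnegativity via comparison.} Let $\psi^*$ denote the unique minimizer of $J_\gamma$ over the closed convex set $\Phi_1^+:=\Phi_1\cap\{\phi\ge 0\}$, which exists by the same convexity and coercivity argument. Since $\Phi_1^+\subseteq\Phi_1$ one has $J_\gamma[\phi^*]\le J_\gamma[\psi^*]$; it remains to show the reverse inequality, for then uniqueness of $\phi^*$ over $\Phi_1$ forces $\phi^*=\psi^*\ge 0$. To this end I would invoke the P\'olya representation of convex nonincreasing functions,
\[
G(t)=G(\infty)+\int_{(0,\infty)}(u-t)^+\,\dif\nu(u),
\]
where $\nu\ge 0$ is a Borel measure on $(0,\infty)$, together with the identity $(u-|x|)^+=\int_{\mathbb{R}}\mathbf{1}_{[y-u/2,y+u/2]}(0)\,\mathbf{1}_{[y-u/2,y+u/2]}(x)\,\dif y$. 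This gives, on $\Phi_1$,
\[
J_\gamma[\phi]=\frac\gamma2\|\phi\|_{L^2}^2+\frac{G(\infty)}{2}+\frac12\int_{(0,\infty)}\int_{\mathbb{R}}\bigg(\int_{y-u/2}^{y+u/2}\phi(t)\,\dif t\bigg)^2\dif y\,\dif\nu(u).
\]
Adapting the rearrangement argument in the proof of Theorem~2.24 of \cite{Gatheral2012} (which treats $\gamma=0$) to this setting with the additional $L^2$ term, one should be able to construct a nonnegative $\tilde\phi\in\Phi_1^+$ with $J_\gamma[\tilde\phi]\le J_\gamma[\phi^*]$. Combined with $J_\gamma[\psi^*]\le J_\gamma[\tilde\phi]$, this closes the argument.

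\textbf{Main obstacle and alternative.} The delicate point is the construction of $\tilde\phi$. The naive candidate $\tilde\phi:=|\phi^*|/\|\phi^*\|_{L^1}$ does not automatically satisfy $J_\gamma[\tilde\phi]\le J_\gamma[\phi^*]$, because the renormalization enforcing $\int\tilde\phi=1$ contracts the $L^2$ norm but can worsen the nonlocal part. The P\'olya representation is indispensable here, since it writes the nonlocal term as a superposition of squared averages of $\phi$, exposing the sign cancellations that a suitable nonnegative rearrangement can eliminate. An alternative route would be to regularize: approximate $\gamma\delta$ by the tent kernel $\gamma K_\epsilon(t):=(\gamma/\epsilon)(1-t/\epsilon)^+$, which is convex and nonincreasing, so that Theorem~2.24 of \cite{Gatheral2012} applies to the admissible kernel $G+\gamma K_\epsilon$ and produces a probability-measure minimizer $\mu_\epsilon^*$; passing to the weak limit $\epsilon\to 0$ and invoking lower semicontinuity of $J_\gamma$ identifies the limit as $\phi^*$, which then inherits nonnegativity from the $\mu_\epsilon^*$.
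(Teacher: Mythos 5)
Your first route runs into the gap you yourself flag, and it is a real one: with $\tilde\phi:=|\phi^*|/\|\phi^*\|_{L^1}$, the renormalization factor $\|\phi^*\|_{L^1}^{-2}\le1$ shrinks the $L^2$ term, but replacing $\phi^*$ by $|\phi^*|$ enlarges every squared average $\big(\int_{y-u/2}^{y+u/2}\phi\big)^2$ in the P\'olya representation; the net sign of the trade-off is genuinely unclear, and there is no obvious rearrangement that simultaneously fixes the constraint $\int\phi=1$ and decreases both terms. This is precisely why the paper does not attempt a direct rearrangement.

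Your second route (replacing $\gamma\delta_0$ by the tent kernel $\gamma K_\epsilon$ and applying Theorem~2.24 of \cite{Gatheral2012} to $G+\gamma K_\epsilon$) is correct in substance and is genuinely different from the paper's argument. The paper discretizes in space: it restricts to piecewise-constant $\phi$ on dyadic grids, shows the induced discrete kernel $G_n$ is eventually convex (Lemma~\ref{convex Gn lemma}), invokes a finite-dimensional nonnegativity result (Theorem~1 of \cite{ASS}) on each grid, and then passes to the weak-$L^2$ limit; the singular case $G(0+)=\infty$ is handled separately by truncating the measure $\eta$ in \eqref{psietaRelationEq}. Your route instead regularizes in the kernel and reduces to the already-established $\gamma=0$ result. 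To close it cleanly you should supply two ingredients that you currently leave implicit: first, a uniform bound showing the weak limit of the probability measures $\mu_\epsilon^*$ has an $L^2$ density (this follows from $J_0^{(\epsilon)}[\mu_\epsilon^*]\le J_0^{(\epsilon)}[1/T]\le C$, the Fourier representation \eqref{eq: J0 Fourier rep} with $\widehat{K_\epsilon}(z)=\mathrm{sinc}^2(z\epsilon/2)\to1$, and Fatou applied to $\frac{\gamma}{2}|\hat\mu_\epsilon^*|^2\widehat{K_\epsilon}+\frac12|\hat\mu_\epsilon^*|^2g$); and second, the identification of the limit with $\phi^*$, using that $J_0^{(\epsilon)}[\psi]\le J_\gamma[\psi]$ for every $\psi\in\Phi_1$ together with the Fatou inequality just described, plus uniqueness. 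One small technical point: since Theorem~2.24 of \cite{Gatheral2012} requires $G$ convex and nonincreasing on all of $(0,\infty)$, and your hypothesis is only convexity on $(0,T]$, you must first extend $G$ beyond $T$ (e.g.\ linearly with slope $G'_-(T)$ until it hits zero) before forming $G+\gamma K_\epsilon$. With those details supplied, your regularization argument is a viable and arguably more elementary alternative to the discretization scheme in the paper, since it avoids the auxiliary finite-dimensional result from \cite{ASS}.
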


The nonnegativity of minimizers to  \eqref{eq: J gamma} and \eqref{eq: J 0}, which only involve a one-dimensional linear constraint, yields the solutions to the minimization of the functional $J_\gamma$ over probability measures or probability densities.  The latter problem is of interest in many applications (see, e.g., \cite{Gatheral2012} and \cite{Alfonsi2013}). 

The following proposition links the minimizer of $J_\gamma$ for $\gamma>0$ to the solution of a Fredholm integral equation of second kind with constant free term.

\begin{proposition}\label{Fredholm prop}Suppose that $\gamma>0$, $T>0$, and  $G$ satisfies \eqref{G assumption}. For a function $\phi\in\Phi_1$, the following conditions are equivalent.
\begin{enumerate}[label={\rm({\alph*})}]
\item $\phi$ solves \eqref{eq: J gamma}.
\item There exists a constant $\sigma$  such that $\phi$ solves \eqref{Fredholm eq}.
\end{enumerate}
In this case, the constant $\sigma$ from {\rm(b)} is equal to $2J_\gamma[\phi]=2\inf_{\psi\in\Phi_1}J_\gamma[\psi] $ and is strictly positive.
\end{proposition}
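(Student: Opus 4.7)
The approach is a standard Lagrange-multiplier / first-variation argument for the quadratic functional $J_\gamma$ on the Hilbert space $L^2[0,T]$ subject to a single linear constraint. Introduce the self-adjoint bounded operator $A\colon L^2[0,T]\to L^2[0,T]$ given by $(A\phi)(t)=\gamma\phi(t)+\int_0^TG(|t-s|)\phi(s)\dif s$, so that $2J_\gamma[\phi]=\langle\phi,A\phi\rangle_{L^2}$. The constraint $\int_0^T\phi\dif t=1$ cuts out an affine subspace whose tangent space is the closed hyperplane $H_0=\{\eta\in L^2[0,T]:\int_0^T\eta\dif t=0\}$, and its orthogonal complement is simply the one-dimensional span of the constant function $\mathbf{1}$.

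For (a)$\Rightarrow$(b), I take any $\eta\in H_0$; then $\phi+\varepsilon\eta\in\Phi_1$ for every $\varepsilon\in\mathbb{R}$, and the expansion
\[J_\gamma[\phi+\varepsilon\eta]=J_\gamma[\phi]+\varepsilon\langle\eta,A\phi\rangle+\varepsilon^2J_\gamma[\eta]\]
forces the first-order term $\langle\eta,A\phi\rangle$ to vanish for every $\eta\in H_0$. Hence $A\phi\in H_0^\perp=\operatorname{span}(\mathbf{1})$, which is precisely \eqref{Fredholm eq}. Conversely, for (b)$\Rightarrow$(a), suppose $A\phi=\sigma$ a.e., fix $\psi\in\Phi_1$ and set $\eta=\psi-\phi\in H_0$. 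Self-adjointness of $A$ and the vanishing $\langle\eta,A\phi\rangle=\sigma\int_0^T\eta\dif t=0$ yield
\[2J_\gamma[\psi]-2J_\gamma[\phi]=\langle\psi,A\psi\rangle-\langle\phi,A\phi\rangle=2\langle\eta,A\phi\rangle+\langle\eta,A\eta\rangle=2J_\gamma[\eta].\]
To conclude $J_\gamma[\psi]\ge J_\gamma[\phi]$, I then need $J_\gamma$ to be nonnegative on $H_0$; this follows from $G$ being of positive type in the sense of \eqref{G of positive type}, which in the applications relevant to this paper will be ensured via the Fourier representation \eqref{eq: Jgamma Fourier rep} (or directly from convexity of $G$), while the contribution $\tfrac\gamma2\|\eta\|_2^2$ is of course always nonnegative.

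Finally, I identify $\sigma$ by testing the Fredholm equation against $\phi$ itself: since $\int_0^T\phi\dif t=1$,
\[\sigma=\int_0^T\sigma\,\phi(t)\dif t=\langle\phi,A\phi\rangle=2J_\gamma[\phi],\]
and combining with the equivalence (a)$\Leftrightarrow$(b) gives $\sigma=2\inf_{\psi\in\Phi_1}J_\gamma[\psi]$. Strict positivity is then a Cauchy-Schwarz estimate: $1=\bigl(\int_0^T\phi\dif t\bigr)^2\le T\|\phi\|_2^2$ combined with the positive-type bound on the double integral yields $\sigma\ge\gamma\|\phi\|_2^2\ge\gamma/T>0$. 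The main obstacle in the whole argument is establishing nonnegativity of the quadratic form associated with $J_\gamma$ on $H_0$; once that is in hand, the rest is routine symmetric bilinear-form bookkeeping.
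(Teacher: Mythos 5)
Your argument is correct and coincides with the paper's own proof, merely rephrased in operator language: the paper works with the symmetric bilinear form $J_\gamma[f,g]=\tfrac12(J_\gamma[f+g]-J_\gamma[f]-J_\gamma[g])$ where you use $\langle\phi,A\phi\rangle$, but in both cases the first variation against the tangent hyperplane $\{\eta:\int_0^T\eta\dif t=0\}$ yields the Euler--Lagrange equation \eqref{Fredholm eq}, and the reverse implication follows from the same bilinear identity together with nonnegativity of $J_\gamma$ on that hyperplane. If anything you are slightly more explicit than the paper's proof, which uses the inequality $J_\gamma[\psi]\ge0$ without comment in the (b)$\Rightarrow$(a) step and does not spell out the Cauchy--Schwarz bound $\sigma=2J_\gamma[\phi]\ge\gamma\|\phi\|_{L^2}^2\ge\gamma/T>0$ that you give for the strict positivity of $\sigma$.
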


Now we prepare for the statement of our main result. Let $\tau\in(0,\infty]$.  Recall that a function $f:(0,\tau)\to\mathbb{R}$ is called \emph{completely monotone on $(0,\tau)$} if $f$ admits derivatives of all orders throughout $(0,\tau)$ and if 
$(-1)^nf^{(n)}(x)\ge0$ for all $x\in (0,\tau)$ and $n=0,1,\dots$. According to Bernstein's theorem, completely monotone functions on $(0,\infty)$ are a special case, as they can be represented as the Laplace transforms of positive Radon measures on $[0,\infty)$. This representation may fail if $\tau<\infty$. A simple example is the function $f(t)=e^t+e^{T-t}$ for $T>0$, which is completely monotone on $(0,T/2)$  but not on  $(0,T)$. This function, however, belongs to the following class.

\begin{definition}A function $f:(0,T)\to\mathbb{R}$ is called \emph{symmetrically totally monotone} if it is analytic in $(0,T)$ and its power series development around $T/2$ is of the form
$$f(x)=\sum_{n=0}^\infty a_{2n}(x-T/2)^{2n}
$$
for coefficients $a_{2n}\ge0$.
\end{definition}

This terminology is motivated by the fact that  any symmetrically totally monotone function $f$ on $(0,T)$ is symmetric in the sense that $f(x)=f(T-x)$, completely monotone on $(0,T/2)$, and absolutely monotone on $(T/2,T)$ (i.e., $f^{(n)}(x)\ge0$ for $x\in(T/2,T)$ and $n=0,1,\dots$). In particular, every symmetrically totally monotone function on $(0,T)$ is convex and has a minimum at $T/2$.

\begin{theorem}\label{th:completemonotonicity}
	Suppose that  that $T>0$ and that $G:(0,\infty)\to[0,\infty)$ is completely monotone, nonconstant, and satisfies  \eqref{eq: weakly singular G def}.
	\begin{enumerate}[label={\rm({\alph*})}]
	\item For $\gamma>0$, the unique minimizer of \eqref{eq: J gamma} is symmetrically totally monotone.
	\item For $\gamma=0$, the restriction  to $(0,T)$ of the unique minimizer $\mu^*$ of \eqref{eq: J 0} admits a symmetrically totally monotone Lebesgue density. 
	\end{enumerate}
\end{theorem}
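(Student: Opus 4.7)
The plan is to first prove (a) using Proposition~\ref{Fredholm prop} together with the Bernstein representation of $G$, and then deduce (b) by a vanishing-viscosity limit $\gamma\downarrow 0$. By Proposition~\ref{Fredholm prop}, the minimizer $\phi$ in (a) solves
\begin{equation*}
\gamma\phi(t) + \int_0^T G(|t-s|)\,\phi(s)\,ds = \sigma
\end{equation*}
with $\sigma = 2J_\gamma[\phi]>0$. Invariance of $J_\gamma$ and the constraint under $t\mapsto T-t$, combined with uniqueness of the minimizer, force $\phi(t)=\phi(T-t)$, so that only analyticity on $(0,T)$ and the nonnegativity of the even Taylor coefficients at $T/2$ remain to be shown.

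The key technical device is the Bernstein representation $G(t)=\int_{[0,\infty)}e^{-\lambda t}\,\nu(d\lambda)$, which reduces everything to the elementary kernel $e^{-\lambda|t-s|}$, whose associated integral $V_\lambda\phi(t):=\int_0^T e^{-\lambda|t-s|}\phi(s)\,ds$ satisfies the identity $\partial_t^2 V_\lambda\phi=\lambda^2 V_\lambda\phi-2\lambda\phi$. To handle the potential singularity of $G$ at $0$, I would first work with the shifted kernel $G_\varepsilon(t):=G(t+\varepsilon)$, whose Bernstein measure $e^{-\lambda\varepsilon}\nu(d\lambda)$ has finite moments of every order; $G_\varepsilon\to G$ in $L^1([0,T])$, and standard stability of the Fredholm problem gives $\phi_\varepsilon\to\phi$ in $L^2$. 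Iterating the identity above and integrating against $e^{-\lambda\varepsilon}\nu(d\lambda)$ yields an explicit recursion for $\phi_\varepsilon^{(2n)}(t)$ in terms of the lower even derivatives and integrals of $V_\lambda\phi_\varepsilon$, which both certifies analyticity of $\phi_\varepsilon$ on $(0,T)$ and gives concrete expressions for its Taylor coefficients at $T/2$.

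The central step is to show $\phi_\varepsilon^{(2n)}(T/2)\geq 0$ for every $n$. The base case $\phi_\varepsilon(T/2)\geq 0$ is provided by Proposition~\ref{GSS prop}. For the induction step, I would further approximate $G_\varepsilon$ by finite sums of exponentials $\sum_{k=1}^N c_k e^{-\lambda_k t}$, for which the Fredholm equation collapses to a linear ODE boundary value problem of order $2N$ with constant coefficients. Symmetry and the constant right-hand side then force the unique solution to have the explicit form
\begin{equation*}
\tilde\phi(t) = A_0 + \sum_{k=1}^{N} A_k \cosh\bigl(\mu_k(t-T/2)\bigr),
\end{equation*}
so that the claim reduces to $A_k\geq 0$ for every $k$: expanding each $\cosh$ then produces a Taylor series at $T/2$ with nonnegative even coefficients. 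I would attempt to establish positivity of the $A_k$ either via an explicit algebraic identity expressing them as positive rational functions of the data $(c_k,\lambda_k,\gamma,T)$, or by an iteration argument that recovers each $A_k$ as the image of $\sigma\one$ under a chain of positivity-preserving operators. Vitali's theorem for analytic functions then transfers the conclusion through the two limits (finite-exponential approximation and $\varepsilon\downarrow 0$).

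Finally, part (b) follows by letting $\gamma\downarrow 0$: part (a) gives each $\phi_\gamma$ as a symmetrically totally monotone probability density, and weak convergence $\phi_\gamma(t)\,dt\to\mu^*$, combined with uniform control of the Taylor coefficients on compact subsets of $(0,T)$, identifies the interior part of $\mu^*$ with an absolutely continuous measure whose density is symmetrically totally monotone. The principal obstacle I foresee is the positivity of the coefficients $A_k$ in the finite-exponential expansion: although it morally reflects the complete monotonicity of $G$, the closed-form expressions for the $A_k$ produced by the boundary value problem contain cancellations of signs whose resolution demands a careful algebraic or variational accounting.
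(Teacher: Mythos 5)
Your overall architecture matches the paper's: reduce via Proposition~\ref{Fredholm prop} and symmetry to the Fredholm equation, pass to finite exponential kernels using Bernstein's theorem, turn the Fredholm equation into a constant-coefficient ODE boundary value problem whose unique solution is a linear combination of $\cosh\bigl(\mu_k(t-T/2)\bigr)$ terms, show the coefficients are nonnegative, and recover the general case and the $\gamma=0$ case by limits. This is essentially the route the paper takes.

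However, the step you yourself flag as ``the principal obstacle'' --- showing that the coefficients $A_k$ (the paper's $z_i$) are nonnegative --- is not a side remark but \emph{the} content of the theorem, and your proposal does not supply an argument for it. This is precisely what Steps~\textbf{5} and~\textbf{6} of the paper's proof establish, and it requires substantial machinery: an explicit inversion formula for the Cauchy matrix $\tilde Q=\bigl(1/(c_j-b_i)\bigr)_{ij}$ due to Schechter, the interlacing $c_n>b_n>c_{n-1}>\cdots>c_1>b_1$ to obtain positive diagonal factors $D_1,D_2$, a Lagrange interpolation plus Rolle's theorem argument (Petrov) to show that $\tilde N_2^{-1}$ is a $Z$-matrix, the characterization of nonsingular $M$-matrices (positive definite $Z$-matrix $\Rightarrow$ inverse-nonnegative), and the Woodbury identity to control the off-diagonal signs of $(\tilde N_2+\tilde N_3)^{-1}$. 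None of this is implied by ``complete monotonicity of $G$'' without detailed algebraic work, and the ``chain of positivity-preserving operators'' you hope for does not obviously exist: the ODE boundary conditions mix terms with both signs, and naive closed-form expressions for the $A_k$ do exhibit the cancellations you worry about. As written, the proposal does not prove the theorem.

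A secondary issue is the passage to limits. You invoke Vitali's theorem, but Vitali requires uniform boundedness of analytic extensions on a fixed complex neighbourhood, which is not available directly from $L^2$-weak convergence of the $\phi_n$. The paper instead proves a closure lemma (Lemma~\ref{weakly closed lemma}, built on Lemmas~\ref{symm tot mon pointwise lemma} and~\ref{weak convergence lemma}) using the convexity of the approximants and Rockafellar's theorem on pointwise limits of convex functions, together with Bernstein's 1914 result to recover analyticity from nonnegativity of all forward differences. This convexity-based argument is the correct replacement for Vitali here, and also drives the $\gamma\downarrow 0$ and $G(0+)=\infty$ limits in parts (a) and (b). Your $\varepsilon$-shift of the kernel is a reasonable variant of the paper's truncation of the L\'evy measure $\eta$, but you still need the closure lemma, not Vitali, to conclude.
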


\begin{remark} Theorem~\ref{th:completemonotonicity}
answers our initial question $(*)$ by providing a sufficient criterion on $G$ that guarantees that solutions of \eqref{Fredholm eq} are convex with a minimum at $T/2$. It makes sense, however, to ask whether our condition of complete monotonicity can perhaps be replaced by $n$-monotonicity for some $n\ge2$. Recall that a function $f$ on $(0,\infty)$ is called $n$-monotone if $(-1)^kf^{(k)}$ is nonnegative, nonincreasing, and convex for $k=0,1,\dots, n-2$. According to \cite{Williamson}, any such function $f$ can be represented in the form 
$$f(t)=\int ((1-\rho t)^+)^{n-1}\,\mu(\dif \rho)
$$
for some Radon measure $\mu$ on $(0,\infty)$.
We know from  \cite{Gatheral2012}  that 2-monotonicity of $G$ is a sufficient condition for the nonnegativity of $\varphi$. It is thus tempting to conjecture that 4-monotonicity of $G$ is sufficient for the convexity of $\varphi$. Unfortunately, however, our numerical analysis provided in Figure~\ref{comp mon figure} suggests that this conjecture is not true: there are nonconvex solutions to \eqref{Fredholm eq} for $G(t)==((1-10 t)^+)^4$ and $G(t)=((1-10 t)^+)^5$.
\end{remark}

\begin{remark}One may wonder why in Theorem~\ref{th:completemonotonicity},   $G$ is defined on the entire interval $(0,\infty)$, although only its values on $(0,T]$ are relevant for our problems \eqref{eq: J gamma} and  \eqref{eq: J 0}. The reason is that our proof of Theorem~\ref{th:completemonotonicity} makes heavy use of Bernstein's theorem that gives a one-to-one correspondence between the completely monotone functions on $[0,\infty)$ and the Laplace transforms of nonnegative finite Borel measures on $[0,\infty)$. This correspondence may fail if $G$ is only defined on a finite interval.  An example is the function $\arcsin(1-t)$, which is completely monotone on $(0,1)$. \end{remark}

For $\gamma=0$, the unique minimizer $\mu^*$ has strictly positive point masses in $0$ and $T$ as soon as both $G(0+)$ and $G'(0+)$ are finite and $G$ is convex in addition to \eqref{G assumption} \citep[Theorem 2.23]{Gatheral2012}. If, however,   $G(0+)=\infty$, then we must have $\mu^*(\{0\})=\mu^*(\{T\})=0$, and so $\mu^*$ will be absolutely continuous with respect to the Lebesgue measure on all of $[0,T]$. We presently do not know what happens if $G(0+)<\infty$ and $|G'(0+)|=\infty$.

\begin{example}[Exponential kernel]Consider a completely monotone  kernel of the form 
$$
	G(t) = \sum_{k=1}^n a_k e^{-\sqrt{b_k} t}
$$
for  coefficients $a_1, a_2, \dots, a_n > 0$ and $b_n > b_{n-1} > \dots > b_1 > 0$.  We will show in Section~\ref{exponentialkernels} that the unique solution of
\eqref{Fredholm eq} is of the form 
$$\phi(t)=z_0+\sum_{i=1}^nz_i\big(e^{\sqrt{c_i}t}+e^{\sqrt{c_i}(T-t)}\big)
$$
where $z_i\ge0$ and the coefficients $c_i$ are equal to the eigenvalues of the matrix $M$ from \eqref{eq:M matrix} and satisfy $c_n>b_n>c_{n-1}>b_{n-1}>\cdots c_1>b_1>0$. This function $\phi$ is clearly symmetrically totally monotone.
In the special case $n=1$ with $G(t)=e^{-\sqrt b t}$, we have $c=b+\frac2\gamma \sqrt{b}$ and a direct calculation   yields that 
	$$		
		\phi(t) = \frac{ \sigma b}{\gamma c} \Big( 1 + \frac{2 ( e^{\sqrt c t} + e^{\sqrt c (T-t)} )}{\gamma\big(e^{\sqrt c T} (\sqrt b+\sqrt c) + \sqrt b-\sqrt c\big)} \Big),
		\qquad t \in [0,T],
		$$
	where the constant $\sigma>0$ is as in \eqref{Fredholm eq}. For solving \eqref{eq: J gamma}, $\sigma$ can be determined through the condition $\int_0^T\phi(t)\dif t=1$.
\end{example}

The following two examples illustrate that the assertions of Proposition~\ref{GSS prop} and Theorem~\ref{th:completemonotonicity} need no longer be true if the corresponding hypotheses are not satisfied. More precisely, the following Example~\ref{capped linear example} shows that the minimizer $\phi$ need not be convex even if $G$ is convex and nonincreasing, and Example~\ref{trig example} illustrates that $\phi$ can become negative if $G$ is merely of positive type and not convex. 

\begin{example}[Capped linear kernel]\label{capped linear example}Consider the convex nonincreasing kernel $G(t)=(1- t)^+$ and the equation
\begin{equation}
\label{eq:cappedlinear}
	\gamma \phi(t) + \int_0^{T} \big( 1- |t-s| \big)^+ \phi(s) \dif s = \sigma,
\end{equation}
where we assume for simplicity that $T=n\in\mathbb{N}$. For $i = 1, \dots, n,$ define
$
	\lambda_i \coloneqq 2 \big( 1- \cos \big( \frac{i\pi}{n+1}\big) \big)
$
and
$
	b_i \coloneqq \sqrt{\lambda_i / \gamma}.
$
Let $B \coloneqq \diag(b_1, \dots, b_n),$
$$
	Q \coloneqq \Big( \sin \Big( \frac{ij\pi}{n+1} \Big) \Big)_{i,j = 1, \dots, n}
$$
and $E(t) \coloneqq \diag\big( e^{b_1 t}, \dots, e^{b_n t}\big).$
Furthermore, define ${\bm \sigma} \coloneqq (\sigma, \dots, \sigma) \in \mathbb{R}^n$, 
denote by $I$ the $n$-dimensional identity matrix,  let $J \coloneqq \diag(1, -1, 1, \dots, \pm 1) \in \mathbb{R}^{n\times n}$,
and put $K \coloneqq I + 
( \delta_{ j,n-i} )_{i,j = 1, \dots, n}$. For the solution $\phi$ of \eqref{eq:cappedlinear}, as provided by Propositions~\ref{GSS prop} and~\ref{Fredholm prop}, define $\phi_i(t) \coloneqq \phi(t+i-1)$ for $t \in [0,1], i = 1, \dots, n.$
We will prove in Section~\ref{capped linear section} that the functions $\phi_1,\dots, \phi_n$ are given by
\begin{equation}\label{capped linear formula}
\big( \phi_1(t), \dots, \phi_n(t) \big)^\top = Q \big( E(t) + E(1-t) J \big) a,
	\qquad t \in [0,1],
\end{equation}
where
$$
	a \coloneqq \Big( \gamma Q (E(1) + J) + KQ \big( (E(1)-I)(J-I) + B(E(1)-J)\big) B^{-2} \Big)^{-1} {\bm \sigma}.
$$
See Figure 1 for an illustration. 
\end{example}

\begin{example}[Trigonometric kernel]\label{trig example}Let $G(t)=\cos(\rho t)$ for a constant $\rho>0$. It is well known that $G$ is a positive definite function and hence satisfies \eqref{G of positive type}, but it is of course not convex. One easily verifies that the solution $\phi$ of \eqref{Fredholm eq}  is given by
$$\phi(t)=\frac\sigma\gamma\bigg(1-\frac{2\tan(\rho T/2)\big(\cos(\rho t)+\cos(\rho(T-t)\big)}{\rho(2\gamma+T)+\sin(\rho T)} \bigg).
$$
This function  clearly takes negative values; see  Figure 2.
\end{example}

\begin{minipage}[b]{8.5cm}
\includegraphics[width=8.5cm]{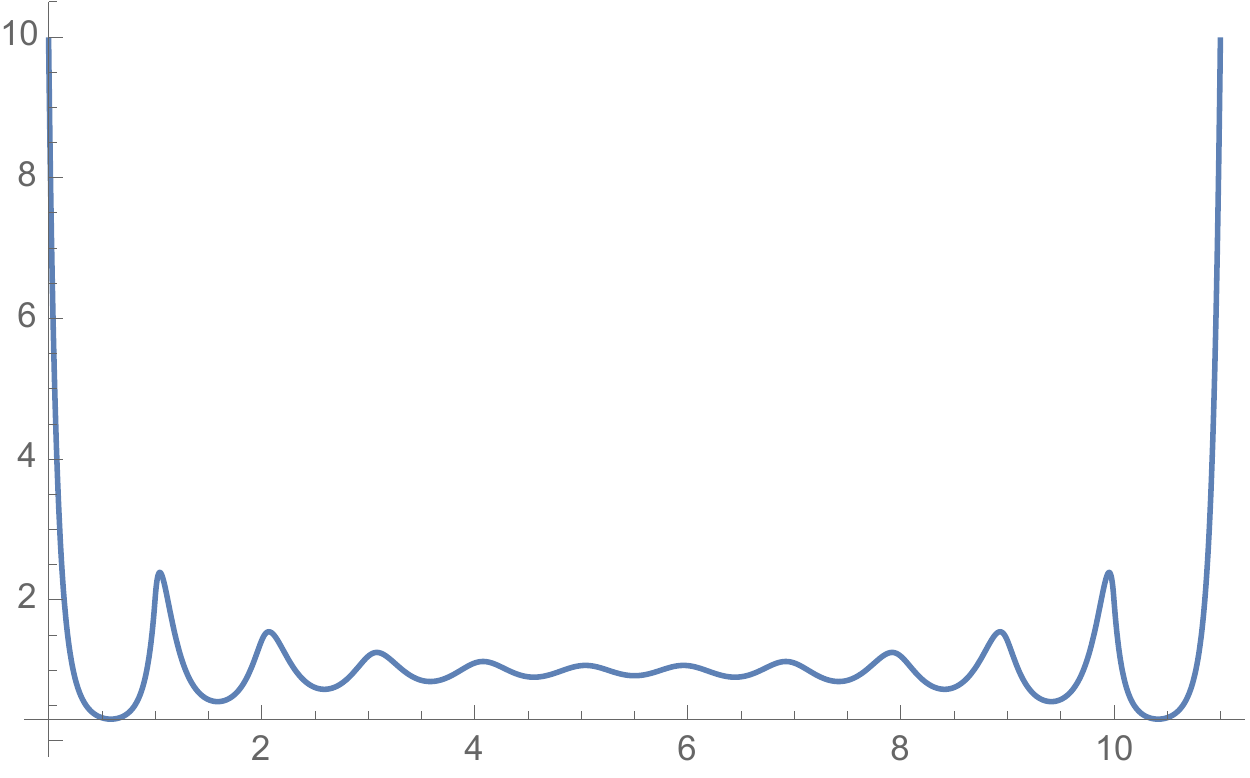}\\
Figure 1: Solution $\phi$ of \eqref{eq:cappedlinear} for $G(t)=(1-t)^+$, $\gamma=0.01$, and $T=11$. Although $\phi$ is positive, it is not convex.
\end{minipage}\qquad
\begin{minipage}[b]{8.5cm}
\vskip-1cm\includegraphics[width=8.5cm]{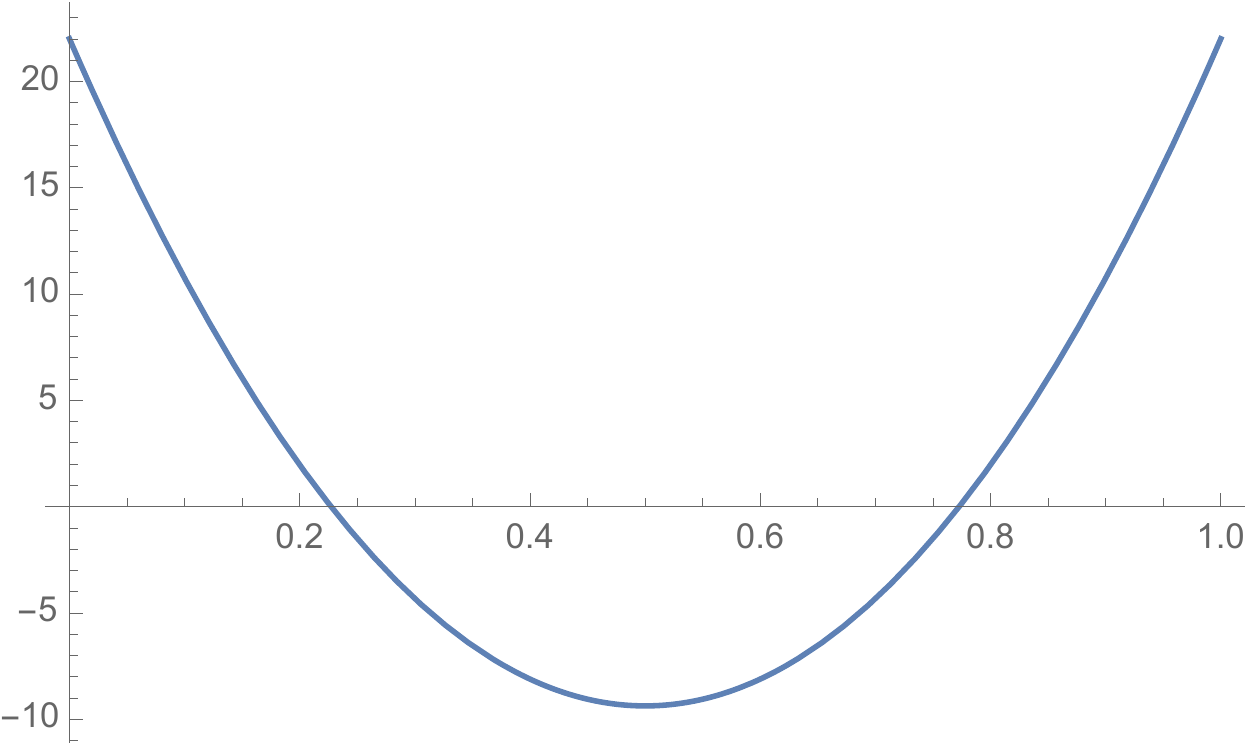}\vspace{10pt}
Figure 2: Solution $\phi$ of \eqref{Fredholm eq}  for $G(t)=\cos (t/2)$, $\gamma=0.001$, and $T=1$.\\ $ $
\end{minipage}
\addtocounter{figure}{2}
\begin{figure}[h]
\centering
\includegraphics[width=8.5cm]{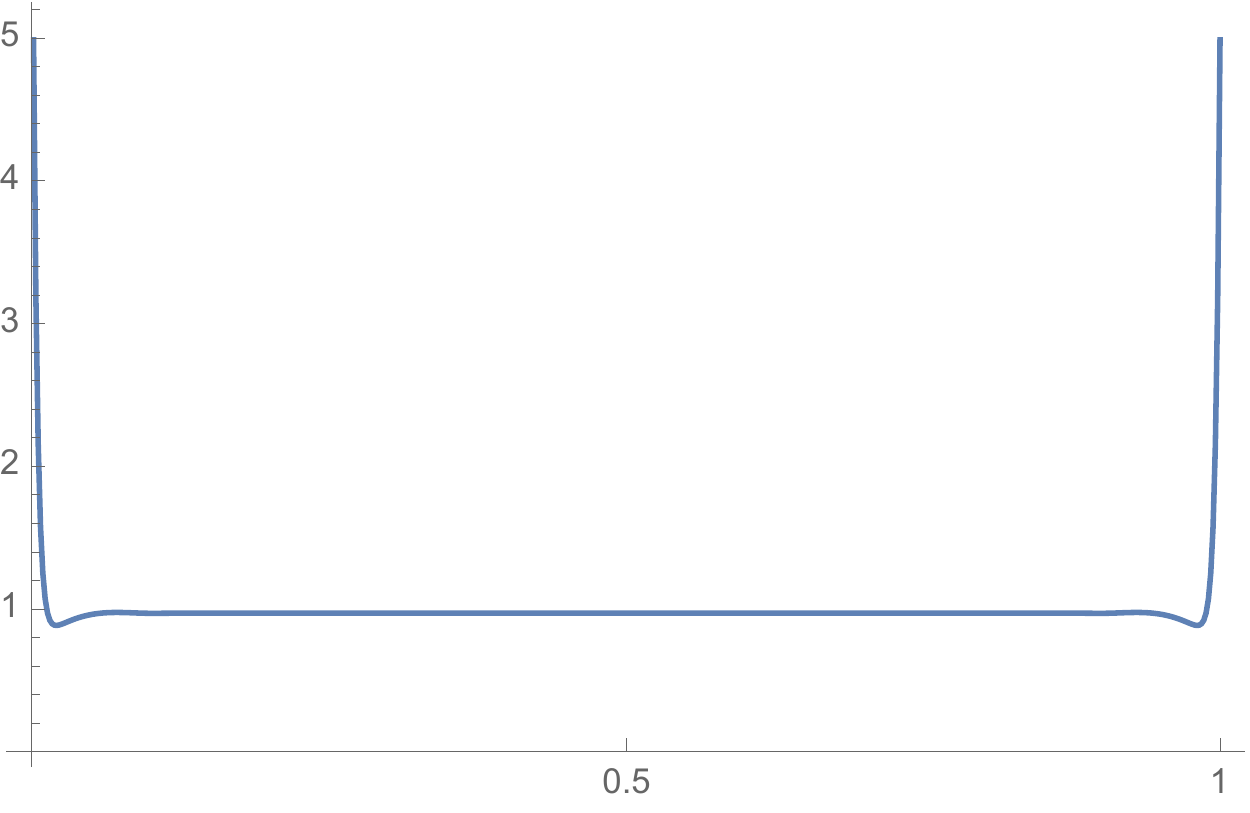}\qquad
\includegraphics[width=8.5cm]{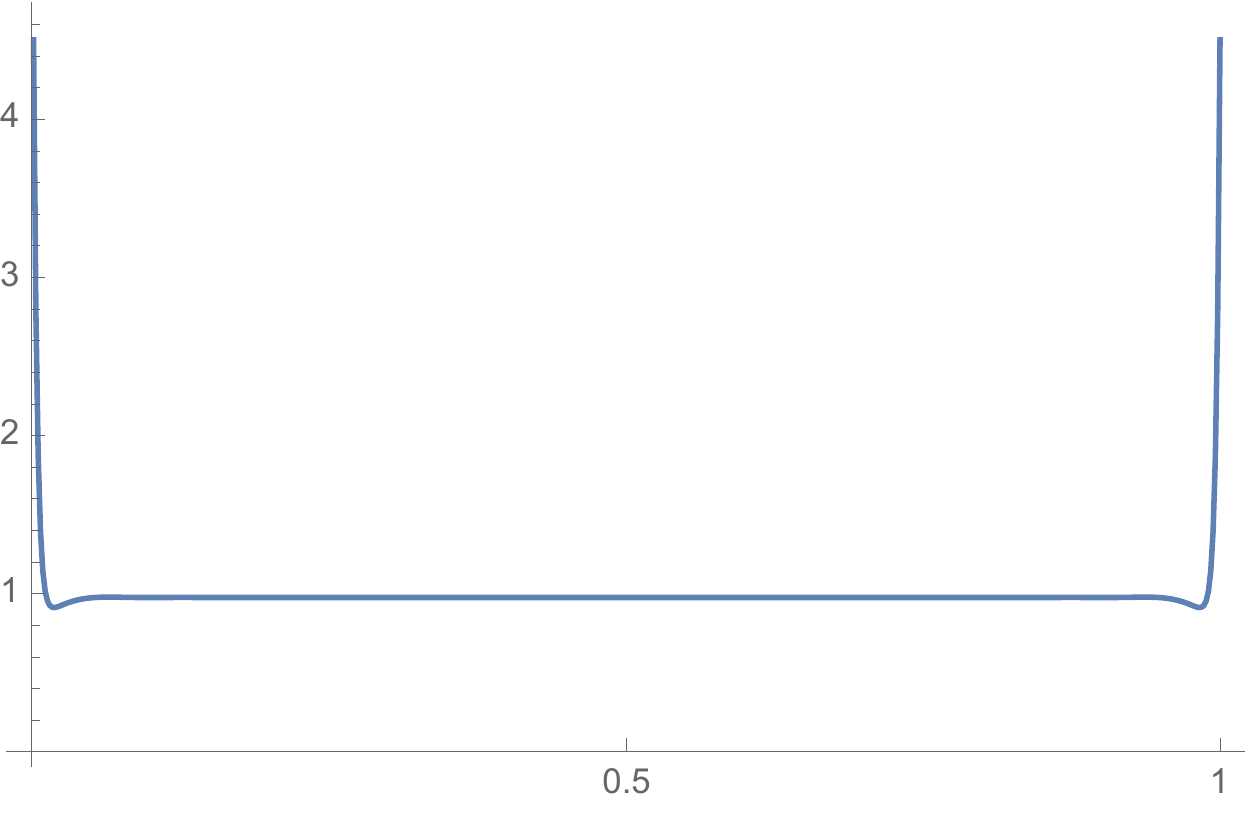}
\caption{Solutions $\varphi$ of \eqref{Fredholm eq}  for $G(t)=((1-10 t)^+)^4$ (left) and $G(t)=((1-10 t)^+)^5$ with $T=1$ and $\gamma=0.001$.  }
\label{comp mon figure}
\end{figure}

\section{Auxiliary results on symmetrically totally monotone functions}

Let us introduce the notation $\Delta_hf(x):=f(x+h)-f(x)$ for a function $f$. We will say that $f$ is symmetric around $T/2$ if $f(x)=f(T-x)$. 

\begin{lemma}\label{symmetric totally mon lemma}For an analytic function $f:(0,T)\to\mathbb{R}$, the following conditions are equivalent.
\begin{enumerate}[label={\rm({\alph*})}]

\item $f$ is symmetrically totally monotone.
\item $f$ is  symmetric around $T/2$, completely monotone on $(0,T/2)$, and absolutely monotone on $(T/2,T)$.
\item $f$ is  symmetric around $T/2$ and 
$$\Delta_h^nf(x)=\sum_{k=0}^n(-1)^{n-k}{n\choose k}f(x+kh)\ge0
$$
for $x> T/2$, $n=1,2,\dots$,  and $h>0$ with $x+nh<T$.
\end{enumerate}\end{lemma}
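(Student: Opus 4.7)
The plan is to close the cycle (a)$\Rightarrow$(b)$\Rightarrow$(c)$\Rightarrow$(a), using the Taylor expansion at $T/2$, the symmetry relation $f(x)=f(T-x)$, and the classical integral representation of iterated forward differences.

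For (a)$\Rightarrow$(b), I would differentiate the series $f(x)=\sum_{n\ge 0}a_{2n}(x-T/2)^{2n}$ term by term. Since only even powers of $x-T/2$ appear, symmetry around $T/2$ is immediate. For $x\in(T/2,T)$ and $y:=x-T/2>0$, each derivative $f^{(k)}(x)$ becomes a convergent power series in $y$ with nonnegative coefficients $a_{2n}(2n)!/(2n-k)!$ (for $2n\ge k$), so $f^{(k)}(x)\ge 0$, which is absolute monotonicity on $(T/2,T)$. Differentiating the identity $f(T-x)=f(x)$ exactly $k$ times yields $f^{(k)}(T/2-y)=(-1)^k f^{(k)}(T/2+y)$, which translates absolute monotonicity on $(T/2,T)$ into complete monotonicity on $(0,T/2)$.

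For (b)$\Rightarrow$(c), the symmetry is already assumed, so it remains to verify the finite-difference inequalities. I would invoke the integral representation
\begin{equation*}
\Delta_h^n f(x)=\int_0^h\!\cdots\int_0^h f^{(n)}(x+s_1+\cdots+s_n)\,ds_1\cdots ds_n,
\end{equation*}
established by induction on $n$ starting from $\Delta_h f(x)=\int_0^h f'(x+s)\,ds$. For $x>T/2$ and $h>0$ with $x+nh<T$, every argument $x+s_1+\cdots+s_n$ lies in $(T/2,T)$, so absolute monotonicity forces the integrand, and hence the difference, to be nonnegative.

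For (c)$\Rightarrow$(a), I first recover (b). Dividing $\Delta_h^n f(x)\ge 0$ by $h^n$ and letting $h\downarrow 0$, the smoothness of $f$ (a consequence of analyticity) together with the standard expansion $\Delta_h^n f(x)=h^n f^{(n)}(x)+O(h^{n+1})$ yields $f^{(n)}(x)\ge 0$ for all $x\in(T/2,T)$ and all relevant $n$; the symmetry identity $f^{(k)}(T/2-y)=(-1)^k f^{(k)}(T/2+y)$ then converts absolute monotonicity on $(T/2,T)$ into complete monotonicity on $(0,T/2)$. For (a) itself, the analyticity of $f$ on $(0,T)$ guarantees that the Taylor series $f(x)=\sum_{k\ge 0}\frac{f^{(k)}(T/2)}{k!}(x-T/2)^k$ converges on the full interval $(0,T)$ (its radius of convergence at $T/2$ is at least $T/2$). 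Symmetry forces $f^{(k)}(T/2)=0$ for odd $k$, while the nonnegativity of $f^{(2n)}$ on $(T/2,T)$ together with continuity at $T/2$ gives $f^{(2n)}(T/2)=\lim_{x\downarrow T/2}f^{(2n)}(x)\ge 0$, hence $a_{2n}\ge 0$.

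The only substantive ingredient is the integral formula for $\Delta_h^n f$; the remaining steps are routine manipulations of the power series and of the symmetry relation, and I do not anticipate any serious obstacle.
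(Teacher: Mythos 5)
Your proposal proves the cycle (a)$\Rightarrow$(b)$\Rightarrow$(c)$\Rightarrow$(a), whereas the paper does (a)$\Rightarrow$(c)$\Rightarrow$(b)$\Rightarrow$(a), and the tools differ in an interesting way. The paper's (c)$\Rightarrow$(b) invokes a 1914 result of Bernstein which establishes that a function satisfying the finite-difference inequalities of (c), \emph{without any a priori smoothness}, is absolutely monotone on $(T/2,T)$ and extends analytically to $(0,T)$. Your argument instead uses the elementary limit $\Delta_h^n f(x)/h^n\to f^{(n)}(x)$ available because $f$ is assumed analytic, together with the iterated-integral representation of $\Delta_h^n f$. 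For the lemma as stated this is a genuine and welcome simplification; the trade-off is that the paper's reliance on Bernstein's result is reused (in the proof of Lemma~\ref{symm tot mon pointwise lemma}) precisely to handle pointwise limits that are \emph{not} known to be analytic in advance, so your elementary replacement does not transport to that later argument.

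There is one genuine error in the final step: the assertion that ``the analyticity of $f$ on $(0,T)$ guarantees that the Taylor series $f(x)=\sum_{k\ge 0}\frac{f^{(k)}(T/2)}{k!}(x-T/2)^k$ converges on the full interval $(0,T)$ (its radius of convergence at $T/2$ is at least $T/2$).'' Real analyticity on an interval says nothing about the radius of convergence at its midpoint: $f(x)=1/\bigl(1+(x-T/2)^2\bigr)$ is entire in the real-analytic sense on $(0,T)$ for any $T$, yet its Taylor series at $T/2$ has radius exactly $1$. What actually forces the radius to be at least $T/2$ here is the absolute monotonicity you have already derived. Since $f^{(n)}\ge0$ and each $f^{(n)}$ is nondecreasing on $(T/2,T)$, for any $c\in(T/2,T)$ and $x\in(c,T)$ the Taylor remainder satisfies $0\le R_N(x)\le f(x)$, so the Taylor series of $f$ about $c$ converges to $f$ on $[c,T)$; letting $c\downarrow T/2$ and using $0\le f^{(n)}(T/2)\le f^{(n)}(c)$ yields convergence of the series about $T/2$ on $[T/2,T)$, and symmetry extends it to $(0,T)$. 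Replace the appeal to ``analyticity guarantees convergence'' with this absolute-monotonicity estimate and the argument is sound.
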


\begin{proof} The implication (a)$\Rightarrow$(c) is straightforward. The proof of  (c)$\Rightarrow$(b) relies on results by \cite{Bernstein1914}. It was proved there (p.~451) that a function $f$ satisfying condition (c) (without necessarily being analytic \emph{a priori}) is absolutely monotone on $(T/2,T)$ and admits an analytic continuation $\tilde f$ to all of $(0,T)$. By analyticity, $\tilde f$ must coincide with $f$ on $(0,T)$. The symmetry of $f$ now implies that $f$ is completely monotone on $(0,T/2)$.  To show (b)$\Rightarrow$(a), note that complete monotonicity in $(0,T/2)$ together with absolute monotonicity in $(T/2,T)$ implies that $f^{(2n)}(T/2)\ge0$ and $f^{(2n+1)}(T/2)=0$ for $n=0,1,\dots$. Thus, developing $f$ into a power series around $T/2$ gives (a).
\end{proof}

The function $\arcsin(|1-x|)$ shows that the condition of analyticity in Lemma~\ref{symmetric totally mon lemma} cannot simply be dropped.
The following lemma shows in particular that the class of symmetrically totally monotone functions is closed under pointwise convergence.

\begin{lemma}\label{symm tot mon pointwise lemma}Suppose that $(f_n)$ is a sequence of symmetrically totally monotone functions on $(0,T)$ and $D$ is a dense subset of $(0,T)$ such that for each $x\in D$ the limit $\lim_nf_n(x)$ exists and is finite. Then the limit of $f_n(x)$ exists for every $x\in(0,T)$ and the function $f(x):=\lim_nf_n(x)$ is symmetrically totally monotone. Moreover,  $f_n\to f$ uniformly on compact subsets of $(0,T)$ and the coefficients in the power series development $f_n(x)=\sum_{k=0}^\infty a_k^{(n)}(x-T/2)^k$ converge to those in the corresponding development of $f$.\end{lemma}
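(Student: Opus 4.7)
My plan is to convert the pointwise convergence at a single point, via nonnegativity of the power series coefficients, into uniform bounds on the coefficients and then on the functions. The key observation is that for a symmetrically totally monotone function $f_n(x) = \sum_{k\ge 0} a_{2k}^{(n)}(x-T/2)^{2k}$, every summand is nonnegative on $(0,T)$, so the value $f_n(x_0)$ dominates each individual summand.

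First, using the symmetry $f_n(x) = f_n(T-x)$, the hypothesis passes to the symmetrized set $D \cup (T-D)$, so I may assume $D$ is symmetric around $T/2$; then $D \cap (T/2, T)$ is dense in $(T/2, T)$. Fix $x_0 \in D \cap (T/2, T)$ and set $r_0 := x_0 - T/2$. Since $f_n(x_0)$ converges to a finite limit, $f_n(x_0) \le C_{x_0}$ uniformly in $n$. Nonnegativity then yields the uniform coefficient bound
$$a_{2k}^{(n)} \, r_0^{2k} \le f_n(x_0) \le C_{x_0} \qquad \text{for all } n, k,$$
i.e., $a_{2k}^{(n)} \le C_{x_0} r_0^{-2k}$.

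A standard Cantor diagonal argument extracts a subsequence $(n_j)$ along which $a_{2k}^{(n_j)} \to a_{2k}$ for every $k \ge 0$. The limit $f(x) := \sum_{k\ge 0} a_{2k}(x - T/2)^{2k}$ inherits the bound $a_{2k} \le C_{x_0} r_0^{-2k}$; letting $x_0$ range over $D \cap (T/2, T)$ and approach $T$, we see that the radius of convergence is at least $T/2$, so $f$ is analytic on $(0,T)$ with nonnegative even coefficients, hence symmetrically totally monotone. For uniform convergence $f_{n_j} \to f$ on a compact $K \subset (0,T)$, choose $x_0 \in D \cap (T/2, T)$ with $K \subset (T - x_0, x_0)$ and let $\rho := \max_{x \in K} |x - T/2| < r_0$. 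The tail of the series is controlled uniformly in $j$ by the geometric majorant $C_{x_0} \sum_{k > N}(\rho/r_0)^{2k}$, while the finite partial sums converge uniformly on $K$ by convergence of the coefficients; splitting accordingly gives the uniform convergence.

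To upgrade from subsequence to full sequence, suppose $(a_{2k}^{(m_j)})$ is another convergent coefficient subsequence with associated limit function $g$. Both $f$ and $g$ are analytic on $(0,T)$ and equal $\lim_n f_n$ on the dense set $D$, hence $f \equiv g$ on $(0,T)$; uniqueness of power series expansions forces $a_{2k}^{(m_j)} \to a_{2k}$ as well, so the full sequence $a_{2k}^{(n)}$ converges to $a_{2k}$. The full sequence $f_n$ then converges to $f$ uniformly on compacts by the same tail-estimate argument. The main technical hurdle is the passage from pointwise boundedness at one interior point to full geometric control of the coefficient sequence uniformly in $n$; this step relies essentially on the sign condition $a_{2k}^{(n)} \ge 0$, which is precisely what renders symmetric total monotonicity (rather than mere analyticity) a closed class under pointwise limits.
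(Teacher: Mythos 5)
Your proof is correct, and it takes a genuinely different route from the paper's. The paper first invokes Theorem 10.8 of Rockafellar on pointwise limits of convex functions to obtain the existence of the limit $f$, its convexity, and local uniform convergence, and then appeals to a 1914 result of Bernstein on iterated forward differences to establish analyticity of the limit on $(T/2,T)$ with analytic continuation to $(0,T)$; the convergence of the power-series coefficients is then obtained by an iterative argument that repeatedly divides by $|x-T/2|$ and reapplies Rockafellar. You instead exploit the sign structure of the coefficients directly: the bound $a_{2k}^{(n)} r_0^{2k}\le f_n(x_0)$ at a single point of $D\cap(T/2,T)$ yields uniform geometric control of the coefficient sequence, after which the rest is elementary (diagonal extraction, Cauchy--Hadamard to get the radius of convergence $\ge T/2$, a geometric tail estimate for local uniform convergence, and the standard subsequence-of-a-subsequence argument combined with uniqueness of power-series expansions to pass from a convergent subsequence to the full sequence). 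Your approach is self-contained and avoids both external results; the one place where you must be a bit careful, as you note implicitly, is that the subsequence argument for coefficient convergence requires reconstructing a limit function $g$ from any diagonally extracted sub-subsequence and then identifying $g$ with $f$ through the hypothesized convergence on the dense set $D$ — this works because the uniform coefficient bound and hence the tail estimate are available for the whole sequence, not just the originally chosen subsequence. The paper's route has the modest advantage of reusing the Bernstein machinery already set up for Lemma~\ref{symmetric totally mon lemma}, but your argument is arguably tighter and makes the role of coefficient nonnegativity — the real reason the class is pointwise closed — more transparent.
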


\begin{proof}Clearly, every function $f_n$ is convex, so Theorem 10.8 from \cite{Rockafellar} yields the existence of the limit $f(x):=\lim_nf_n(x)$ for every $x\in (0,T)$, the uniform convergence $f_n\to f$ on compact subsets of $(0,T)$, and the fact that $f$ is convex and, hence, continuous on $(0,T)$. Moreover, we clearly have $\Delta^k_hf_n(x)\to\Delta^k_hf(x)$ for every $x\in(T/2,T)$, $k=0,1,2,\dots$, and $h>0$ with $x+kh<T$.  The result from  \cite{Bernstein1914} quoted in the proof of Lemma~\ref{symmetric totally mon lemma} implies that $f$ is analytic on $(T/2,T)$ and can be extended to an analytic function $\tilde f$ on $(0,T)$. We will show below that $\tilde f$ is symmetrically totally monotone. Then the symmetry of the functions $f_n$,  $f$, and $\tilde f$ and the continuity of $f$ and $\tilde f$  will imply that $f=\tilde f$ on all of $(0,T)$, and the result will be proved.

Now we prove that $\tilde f$ is symmetrically totally monotone. To this end, let $f_n(x)=\sum_{k=0}^\infty a_k^{(n)}(x-T/2)^k$ and $\tilde f(x)=\sum_{k=0}^\infty \tilde a_k(x-T/2)^k$ denote the power series developments of $f_n$ and $\tilde f$ around $T/2$.  In a first step, we note that $a_0^{(n)}=f_n(T/2)\to\tilde f(T/2)=\tilde a_0$, according to the convergence established in the preceding paragraph. Next, consider the functions $f_{n,1}(x)=\sum_{k=0}^\infty a_{k+1}^{(n)}|x-T/2|^k$. Since $a_{k}^{(n)}\ge0$ and $a_{2k+1}^{(n)}=0$, these functions are convex and we have 
$$f_{n,1}(x)=\sgn(x-T/2)\sum_{k=0}^\infty a_{k+1}^{(n)}(x-T/2)^k=\frac{f_n(x)-a_0^{(n)}}{|x-T/2|}.$$
Therefore,  these functions converge pointwise on $(0,T/2)\cup (T/2,T)$ to $\tilde f_1(x)=(\tilde f(x)-\tilde a_0)/|x-T/2|$. Using once again Theorem 10.8 from \cite{Rockafellar}, we conclude that $\tilde f_1$ has a continuous and convex extension to all of $(0,T)$ and that $f_{n,1}\to\tilde f_1$ locally uniformly. It follows that $a_1^{(n)}=f_{n,1}(T/2)\to \tilde f_1(T/2)=\tilde a_1$. Next, by considering the convex functions $f_{n,2}(x)=\sum_{k=0}^\infty a_{k+2}^{(n)}(x-T/2)^k$, we conclude in the same way that $a^{(n)}_2\to \tilde a_2$. Iterating this argument further yields that $a^{(n)}_k\to \tilde a_k$  for all $k$, and hence that $\tilde a_k\ge0$ and $\tilde a_{2k}=0$  for all $k$. Therefore, $\tilde f$ is indeed symmetrically totally monotone.
\end{proof}

\begin{lemma}\label{weak convergence lemma}Let $\mathscr{M}$ denote the class of all nonnegative finite Borel measures on $[0,T]$ whose restrictions to $(0,T)$ admit a symmetrically totally monotone Lebesgue density. Then $\mathscr{M}$ is closed with respect to weak convergence of  measures. 
\end{lemma}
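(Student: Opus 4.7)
The plan is to take any sequence $(\mu_n)$ in $\mathscr M$ with $\mu_n\to\mu$ weakly, extract a subsequence whose Lebesgue densities $f_n$ on $(0,T)$ converge locally uniformly to some $\tilde f$, invoke Lemma~\ref{symm tot mon pointwise lemma} to see that $\tilde f$ is symmetrically totally monotone, and finally use weak convergence to identify $\tilde f$ as the density of $\mu|_{(0,T)}$.

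First I would obtain a local $L^\infty$-bound on $(f_n)$. Weak convergence yields $C:=\sup_n\mu_n([0,T])<\infty$. Since each $f_n$ is nonincreasing on $(0,T/2)$ and symmetric about $T/2$, the elementary estimate $x\,f_n(x)\le\int_0^x f_n(s)\,ds\le C$ gives $f_n(x)\le C/\min(x,T-x)$ for all $x\in(0,T)$, so $(f_n)$ is uniformly bounded on every compact subset of $(0,T)$.

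Next, since each $f_n$ is convex, this uniform local bound together with the standard compactness result for sequences of convex functions (Theorem~10.9 in \cite{Rockafellar}) and a diagonal extraction along an exhaustion of $(0,T)$ by compact subintervals produces a subsequence $(f_{n_k})$ converging locally uniformly on $(0,T)$ to some convex function $\tilde f$. Applying Lemma~\ref{symm tot mon pointwise lemma} to $(f_{n_k})$ then shows that $\tilde f$ is itself symmetrically totally monotone.

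The final step is to identify $\tilde f$ with the density of $\mu|_{(0,T)}$. For any continuous $g:[0,T]\to\mathbb R$ whose support is a compact subset of $(0,T)$, potential atoms of $\mu_{n_k}$ at $0$ or $T$ contribute nothing, so $\int g\,d\mu_{n_k}=\int_0^T g(x)f_{n_k}(x)\,dx$; weak convergence sends the left-hand side to $\int g\,d\mu$, while locally uniform convergence on $\operatorname{supp}(g)$ sends the right-hand side to $\int_0^T g\tilde f\,dx$. Since such $g$ determine $\mu$ on $(0,T)$, this yields $\mu|_{(0,T)}=\tilde f(x)\,dx$ and hence $\mu\in\mathscr M$. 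I expect the main obstacle to be the very first step: the densities $f_n$ may blow up near $0$ or $T$, and weak convergence of $\mu_n$ provides no direct control on $f_n$ at the endpoints; it is precisely the monotonicity of $f_n$ away from the midpoint (guaranteed by symmetric total monotonicity) that makes the bound $f_n(x)\le C/\min(x,T-x)$ possible and thus unlocks the compactness argument.
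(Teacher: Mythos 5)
Your proof is correct, but it takes a genuinely different route from the paper's. The paper works with the distribution functions $F_n(x)=\mu_n([0,x])$, which converge pointwise at continuity points of the limit—hence on a dense subset of $(0,T)$—directly from weak convergence; it then observes that $F_n$ is absolutely monotone on $(T/2,T)$ (being the integral of an absolutely monotone density) and concave on $(0,T/2)$, and runs the argument of Lemma~\ref{symm tot mon pointwise lemma} on the $F_n$ to get that $F_0$ is analytic with a symmetrically totally monotone derivative. You work directly with the densities $f_n$, which do \emph{not} converge pointwise from weak convergence alone; your key move is the a priori bound $f_n(x)\le C/\min(x,T-x)$ obtained from the monotonicity of $f_n$ away from $T/2$, which unlocks the compactness theorem for convex functions and yields a locally uniformly convergent subsequence that you then identify as the density of the limit by testing against compactly supported continuous functions. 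Both routes are valid; the paper's is shorter because the $F_n$ are automatically uniformly bounded and inherit pointwise convergence for free, while your approach requires the extra estimate but has the advantage of making the passage from measures to densities completely explicit (and of not needing the slightly delicate anti-symmetry claim $F_n(x)=F_n(T)-F_n(T-x)$, which in the paper's setting implicitly uses that the endpoint atoms are equal).
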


\begin{proof}Let $(\mu_n)_{n=1,2,\dots}$ be a sequence of  measures in $\mathscr{M}$ such that $\mu_n$ converges weakly to a finite measure $\mu_0$ on $[0,T]$ and denote by $F_n(x)=\mu_n([0,x])$ the corresponding distribution functions. Then $F_n(x)\to F_0(x)$ for all continuity points of $F_0$ and hence on a dense subset of $(0,T)$. By assumption, $F_n$ is the integral of an absolutely monotone function on $(T/2,T)$ and thus absolutely monotone there itself. In particular, $F_n$ is convex on $[T/2,T]$. Since, moreover, $F_n(x)=F_n(T)-F_n(T-x)$ for $x\in[0,T/2]$, it is concave on $[0,T/2]$. By arguing as in the proof of Lemma~\ref{symm tot mon pointwise lemma}, we thus conclude that $F_n(x)\to F_0(x)$ for all $x\in(0,T)$, that $F_0$ is absolutely monotone on $(T/2,T)$, and that $F_0$ is analytic on $(0,T)$ with a symmetrically totally monotone  derivative there. \end{proof}

\begin{lemma}\label{weakly closed lemma}The class of all symmetrically totally monotone functions in $L^2[0,T]$ is weakly closed in $L^2[0,T]$.
\end{lemma}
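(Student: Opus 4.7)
The plan is to combine Mazur's lemma with Lemma~\ref{symm tot mon pointwise lemma}. Denote by $\mathscr{S}$ the class of symmetrically totally monotone functions in $L^2[0,T]$.

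First I would verify that $\mathscr{S}$ is a convex subset of $L^2[0,T]$. If $f(x)=\sum_{n=0}^\infty a_{2n}(x-T/2)^{2n}$ and $g(x)=\sum_{n=0}^\infty b_{2n}(x-T/2)^{2n}$ lie in $\mathscr{S}$, then for $\alpha\in[0,1]$ the function $\alpha f+(1-\alpha)g$ has power series $\sum_{n=0}^\infty \bigl(\alpha a_{2n}+(1-\alpha)b_{2n}\bigr)(x-T/2)^{2n}$ around $T/2$; the coefficients are nonnegative, and the series converges on $(0,T)$ since both original series do, so $\alpha f+(1-\alpha)g\in\mathscr{S}$.

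Next, let $(f_n)\subset\mathscr{S}$ converge weakly in $L^2[0,T]$ to some $f\in L^2[0,T]$. By Mazur's lemma, there exists a sequence $g_m$ of convex combinations of the $f_n$ with $n\ge m$ such that $g_m\to f$ strongly in $L^2[0,T]$. By the convexity established above, each $g_m$ lies in $\mathscr{S}$. Passing to a subsequence (still denoted $g_m$), we obtain pointwise almost everywhere convergence $g_m(x)\to f(x)$ on $[0,T]$. In particular, there is a dense subset $D\subset(0,T)$ on which $g_m(x)\to f(x)$.

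Finally, I would apply Lemma~\ref{symm tot mon pointwise lemma} to the sequence $(g_m)$ of symmetrically totally monotone functions: the lemma produces a symmetrically totally monotone function $\tilde f:(0,T)\to\mathbb{R}$ such that $g_m(x)\to\tilde f(x)$ for every $x\in(0,T)$, with uniform convergence on compact subsets of $(0,T)$. Since $\tilde f=f$ almost everywhere on $(0,T)$, the $L^2$-equivalence class of $f$ contains the symmetrically totally monotone representative $\tilde f$, so $f\in\mathscr{S}$ (upon identification of $L^2$-equivalence classes with their canonical representatives). The main point to get right is the interplay between the pointwise-everywhere statement produced by Lemma~\ref{symm tot mon pointwise lemma} and the a.e.~statement given by $L^2$-convergence; convexity of $\mathscr{S}$ (so that Mazur keeps us inside the class) is what makes the argument go through.
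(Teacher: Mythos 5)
Your proof is correct. At the highest level it matches the paper's strategy: exploit the convexity of $\mathscr{S}$ to reduce weak closedness to norm closedness in $L^2[0,T]$ (you make this reduction explicit via Mazur's lemma, while the paper simply cites the standard fact that a norm-closed convex set is weakly closed). Where the two arguments differ is in how norm closedness is established: you pass to an a.e.-convergent subsequence of the Mazur combinations and invoke Lemma~\ref{symm tot mon pointwise lemma} directly, whereas the paper notes that strong $L^2$-convergence of the densities implies weak convergence of the measures $f_n(x)\,\dif x$ and applies Lemma~\ref{weak convergence lemma}. Your route is somewhat more elementary in that it bypasses the detour through measures and distribution functions, at the mild cost of extracting an a.e.-convergent subsequence and being a bit careful about representatives; the paper's route is shorter in-line because Lemma~\ref{weak convergence lemma} is needed for the $\gamma=0$ case of Theorem~\ref{th:completemonotonicity} anyway. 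Both arguments are sound.
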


\begin{proof}Let $\mathscr{S}$ denote the class of all symmetrically totally monotone functions in $L^2[0,T]$. If a sequence $(f_n)$ in $\mathscr{S}$ converges in $L^2[0,T]$ to a function $f$, then $f\ge0$ and the finite measures $f_n(x)\dif x$ converge weakly to the finite measure $f(x)\dif x$. Hence, $f\in\mathscr{S}$ by Lemma~\ref{weak convergence lemma}. Hence, the convex cone $\mathscr{S}$ is closed in $L^2[0,T]$ and thus also weakly closed.\end{proof}

\section{Proofs of the results from Section~\ref{results section}}\label{proofs section}

\subsection{Preliminaries}

Let us start by recalling some facts from \cite{Gatheral2012} on convex, nonincreasing, and nonconstant kernels $G:(0,\infty)\to[0,\infty)$ satisfying the condition \eqref{eq: weakly singular G def}. By Lemma 4.1 in \cite{Gatheral2012}, there exists a positive Radon measure $\eta$ on $(0,\infty)$ such that 
\begin{equation}
\int_{(0,\infty)}\min\{y, y^2\}\,\eta(dy)<\infty
\end{equation}
and 
\begin{equation}\label{psietaRelationEq}
G(x)=G(\infty-)+\int_{(0,\infty)}(y-x)^+\,\eta(dy)\quad\text{for $x>0$.}
\end{equation}
The Fourier transform of a Radon measure $\mu$ on $\mathbb{R}$ for which $\mu([-x,x])$ grows at  most polynomially in $x$ can be defined through 
$$\widehat \mu(f)=\int\widehat f\,d\mu,\qquad f\in\mathscr{S}(\mathbb{R}),
$$
where $\mathscr{S}(\mathbb{R})$ is the usual  Schwartz space   of rapidly decreasing $C^\infty$-functions and
$\widehat f(z)=\int e^{izx}f(x)\dif x$ 
is the Fourier transform of $f$ (in the convention of \cite{Gatheral2012}). With this definition, it was shown in Lemma 4.2 of \cite{Gatheral2012} that $G(|\cdot|)$ can be represented as the Fourier transform of the positive Radon measure
$$\nu(dx)={G}(\infty-)\delta_0(dx)+g(x)\,dx,
$$ 
on $\mathbb{R}$, where the density $g$ is given by
\begin{equation}\label{eq:g function}
g(x)=\frac1\pi\int_{(0,\infty)}\frac{1-\cos xy}{x^2}\,\eta(dy)
\end{equation}
and the measure $\eta$ is in \eqref{psietaRelationEq}. 
Now let $\mu$ be any signed Borel measure     on $[0,T]$  whose total variation measure $|\mu|$ is finite and such that 
$\int \int G(|t-s|)\,|\mu|(dt)\,|\mu|(ds)<\infty$. 
Proposition 4.5 in \cite{Gatheral2012} then shows that 
\begin{equation}\label{eq: J0 Fourier rep}
J_0[\mu]=\frac12\int|\widehat\mu(z)|^2\,\nu(dz).
\end{equation}
It therefore follows from Plancherel's theorem that for $\gamma>0$,
\begin{equation}\label{eq: Jgamma Fourier rep}
J_\gamma[\phi]=\frac\gamma2\int|\widehat\phi(z)|^2\,dz+\frac12\int|\widehat\phi(z)|^2\,\nu(dz),\quad\phi\in\Phi_1.
\end{equation}
As a matter of fact, the preceding identity extends to the space $L^2_G[0,T]$ of all functions $\phi\in L^2[0,T]$ for which $J_\gamma[\phi]$ is finite. It is clear from \eqref{eq: Jgamma Fourier rep}
 and Minkowski's inequality that $L^2_G[0,T]$ is a vector space.

\subsection{Proof of Proposition~~\ref{GSS prop}}

For the proof of  Proposition~\ref{GSS prop}, we will need two auxiliary lemmas. For $n\in\mathbb{N}$, we let $\Phi_1^{(n)}$ denote the set of all $\phi \in L^2[0,T]$ 
	that satisfy $\int_0^T \phi(t) \dif t = 1$ and that are constant 
	on all intervals of the form $[t_k, t_{k+1})$, where $t_k = k2^{-n}T$ for $k=0,\dots, 2^n$. 
	Any such $\phi$ is thus of the form
	\begin{equation}
	\label{alpha in Xn}
		\phi = \sum_{k=0}^{2^n-1} \phi_k \Ind{[t_k, t_{k+1})}
	\end{equation}
	for certain real coefficients $\phi_k$ that sum up to  $2^n/T$. In particular, $\phi$ belongs to $L^\infty[0,T]$ and hence to $\Phi_1$. We need the following simple lemma.

	\begin{lemma}
	\label{n quadratic form lemma}
		Suppose that $G$ satisfies \eqref{G assumption}. For $\phi \in \Phi_1^{(n)}$ of the form \eqref{alpha in Xn}, we have
		$$
			 J_\gamma[\phi] = \sum_{i,j=0}^{2^n}\phi_i\phi_jG_n(|t_i-t_j|)
		$$
		where 
		\begin{align*}
			G_n(0) &= \gamma 2^{-(n+1)}  T  + 2^{-2n+1}  T^2  \int_0^1 G(2^{-n} T s) \, (1-s) \dif s, \\
			G_n(t) &= 2^{-2n} T^2  \int_{-1}^1G(t+2^{-n} T s) \, (1-|s|) \dif s 
			\qquad\text{for $t \ge 2^{-n}T$,}
		\end{align*}	
		and $G_n(t)$ linearly interpolated between $G_n(0)$ and $G_n(2^{-n}T)$ for $ t \in (0,2^{-n}T)$.
	\end{lemma}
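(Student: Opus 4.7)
This is a direct computation: substitute the piecewise-constant representation $\phi = \sum_{k=0}^{2^n-1}\phi_k\Ind{[t_k,t_{k+1})}$ into both terms of $J_\gamma[\phi]$ and evaluate the resulting block integrals. First, since $\phi$ is constant on each interval of length $h := 2^{-n}T$, the local-energy term yields $\frac{\gamma}{2}\int_0^T\phi(t)^2\,dt = \frac{\gamma h}{2}\sum_{k}\phi_k^2$, which accounts for the $\gamma$-contribution to $G_n(0)$ on the diagonal.

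For the kernel term, expand
\begin{equation*}
\int_0^T\int_0^T G(|t-s|)\phi(t)\phi(s)\,dt\,ds = \sum_{i,j}\phi_i\phi_j\,I_{ij},\qquad I_{ij} := \int_{t_i}^{t_{i+1}}\int_{t_j}^{t_{j+1}} G(|t-s|)\,dt\,ds.
\end{equation*}
The key step is to evaluate $I_{ij}$ by the affine substitution $t = t_i + hu$, $s = t_j + hv$, followed by the layer-cake identity $\int_0^1\int_0^1 F(u-v)\,du\,dv = \int_{-1}^1 F(w)(1-|w|)\,dw$. For $i=j$, using $|hu-hv|=h|u-v|$, this gives $I_{ii} = 2h^2\int_0^1 G(hw)(1-w)\,dw$. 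For $i\ne j$, the argument $t_i - t_j + h(u-v)$ does not change sign on $[0,1]^2$ because $|t_i - t_j|\ge h\ge h|u-v|$; dropping the absolute value, the substitution yields $I_{ij} = h^2\int_{-1}^1 G(|t_i-t_j| + hw)(1-|w|)\,dw$. Collecting the diagonal and off-diagonal contributions and reading off the coefficient of $\phi_i\phi_j$ as $G_n(|t_i-t_j|)$ gives the claimed identity.

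There is essentially no obstacle here; the only substantive point is verifying the constant-sign property for $i\ne j$, after which the formula for $G_n$ is determined by bookkeeping and the symmetry $I_{ij}=I_{ji}$. The piecewise-linear extension of $G_n$ to $(0,h)$ is merely a definitional convenience, since the intermediate values never appear as $|t_i-t_j|$ and hence do not enter the sum. Finiteness of every integral is ensured by the boundedness of $\phi$ together with the integrability of $G$ near the origin provided by \eqref{eq: weakly singular G def}.
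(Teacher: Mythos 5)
Your argument is essentially the paper's own proof: decompose $J_\gamma$ over the dyadic blocks, evaluate each block integral by an affine change of variables, and invoke the triangle-kernel identity $\int_0^1\int_0^1 F(u-v)\,du\,dv=\int_{-1}^1 F(w)(1-|w|)\,dw$; you make the sign-invariance of $t-s$ on off-diagonal blocks slightly more explicit than the source, but the substance is identical. One bookkeeping point worth flagging, which you inherit from the paper and which is harmless for every subsequent use of the lemma (convexity, positive definiteness, and the location of the constrained minimizer are all invariant under rescaling of the form): the kernel term in $J_\gamma$ carries a prefactor $\tfrac12$, so the quadratic form actually assembled is $\gamma 2^{-(n+1)}T\sum_i\phi_i^2+\tfrac12\sum_{i,j}\phi_i\phi_j I_{ij}$, whereas with the stated $G_n$ one has $\sum_{i,j}\phi_i\phi_j G_n(|t_i-t_j|)=\gamma 2^{-(n+1)}T\sum_i\phi_i^2+\sum_{i,j}\phi_i\phi_j I_{ij}$; i.e.\ the $G$-contribution is doubled. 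You can see this, for instance, by taking $n=0$, $\phi\equiv 1/T$: then $J_\gamma[\phi]=\frac{\gamma}{2T}+\int_0^1 G(Ts)(1-s)\dif s$ while $\phi_0^2 G_0(0)=\frac{\gamma}{2T}+2\int_0^1 G(Ts)(1-s)\dif s$. So the claim ``reading off the coefficient of $\phi_i\phi_j$ as $G_n(|t_i-t_j|)$ gives the claimed identity'' deserves one more line of bookkeeping; the correct normalization would halve the $G$-integrals in the definition of $G_n$.
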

	\begin{proof}
		We  have
		$$
			\frac{\gamma}{2} \int_0^T\phi(t)^2 \dif t
			= \frac{\gamma}{2} \sum_{i=0}^{2^n-1} \phi_i^2(t_{i+1}-t_i)
			= \gamma 2^{-(n+1)}  T  \sum_{i=0}^{2^n-1} \phi_i^2.
		$$
		Moreover,
		\begin{align*}
			\frac{1}{2} \int_0^T \int_0^T G(|t-s|) \phi(t) \phi(s) \dif s \dif t
			&=\frac{1}{2} \sum_{i,j=0}^{2^n-1}\phi_i \phi_j \int_{t_i}^{t_{i+1}} \int_{t_j}^{t_{j+1}} G(|t-s|) \dif s \dif t.
		\end{align*}
		For $i<j$, we have
		\begin{align*}
			\int_{t_i}^{t_{i+1}} \int_{t_j}^{t_{j+1}} G(|t-s|) \dif s \dif t
			&= 2^{-2n}  T^2  \int_0^1 \int_0^1 G(t_j-t_i+2^{-n} T (s-r)) \dif s \dif r
			= G_n(t_j-t_i).
		\end{align*}
		For $i=j$, we have 
		\begin{align*}
			\int_{t_i}^{t_{i+1}} \int_{t_i}^{t_{i+1}} G(|t-s|) \dif s \dif t
						&= 2^{-2n+1} T^2  \int_0^1 G(2^{-n} T s)(1-s) \dif s.
		\end{align*}
		This determines the values of $G$ in the points $t_k$ for $k = 0,\dots, 2^n$. 
		The values of $G_n(t)$ for all other $t$ do actually not matter for the representation of $J_\gamma[\phi]$, 
		and hence can be chosen arbitrarily, for instance, as in the statement of the lemma.
	\end{proof}

Note  that the function $G_n$ need not be convex if $G$ is convex. 
	This can be seen by taking, for instance, $n=0$,  $T=1$, $G(t)=(3-t)^+$,  and $\gamma$ small. However, we have the following result.
	
	\begin{lemma}\label{convex Gn lemma}If $\gamma>0$ and the convex function $G$ satisfies \eqref{G assumption}, $G(0+)<\infty$, and has a finite right-hand derivative $G'_+(0)$ at zero, then there exists $n_0\in\mathbb N$ such that the function $G_n$ defined in Lemma~\ref{n quadratic form lemma} is convex for each $n\ge n_0$.\end{lemma}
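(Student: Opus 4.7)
The plan is to split the convexity analysis of $G_n$ on $[0,\infty)$ into three parts, corresponding to the regions $[0,h]$, $[h,\infty)$, and the junction at $t=h$, where $h\coloneqq 2^{-n}T$. On $[0,h]$, $G_n$ is linear by construction and hence trivially convex. On $[h,\infty)$, I would use the representation
\[
G_n(t)=h^2\int_{-1}^{1}G(t+hs)(1-|s|)\,\dif s
\]
from Lemma~\ref{n quadratic form lemma}, which exhibits $G_n$ as a nonnegatively weighted mixture of the translates $t\mapsto G(t+hs)$. For $s\in[-1,1]$ and $t\ge h$ the argument $t+hs$ is nonnegative, so each translate inherits convexity from $G$ on $[h,\infty)$, and hence so does $G_n$.

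To lift these two local convexities to global convexity on $[0,\infty)$, the remaining obstruction is a possible concave kink at $t=h$, which would be ruled out by the inequality
\[
\frac{G_n(h)-G_n(0)}{h}\le G_n'(h+).
\]
I would establish it by showing that the left-hand side tends to $-\gamma/2$ and the right-hand side tends to $0$ as $n\to\infty$ (equivalently $h\to 0$). The asymptotics of the secant slope come from a direct expansion: since $G$ is bounded on $[0,2h]$ by $G(0+)<\infty$, the explicit formulas for $G_n(0)$ and $G_n(h)$ in Lemma~\ref{n quadratic form lemma} yield $(G_n(h)-G_n(0))/h=-\gamma/2+O(h)$, only the term $\gamma h/2$ in $G_n(0)$ contributing to leading order while all $G$-integrals contribute $O(h^2)$. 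For the right derivative at $h$, the hypotheses of convexity, monotonicity, and $|G'_+(0)|<\infty$ force $G'_+$ to be nonpositive, nondecreasing, and therefore bounded on $[0,2h]$ by $|G'_+(0)|$. Dominated convergence then justifies differentiating under the integral to obtain
\[
G_n'(h+)=h^2\int_{-1}^{1}G'_+(h(1+s))(1-|s|)\,\dif s,
\]
and a second application yields $G_n'(h+)=h^2\bigl(G'_+(0)+o(1)\bigr)\to 0$. Consequently the desired inequality, and thus the convexity of $G_n$, holds for every $n$ exceeding some $n_0$.

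The main obstacle is handling the endpoint $t=h$ of the convolution formula, where the argument $h(1+s)$ hits $0$ for $s=-1$. This is exactly where the hypotheses $G(0+)<\infty$ and $|G'_+(0)|<\infty$ are indispensable: without them, the integrands involving $G$ and $G'_+$ would lose uniform boundedness on $[0,2h]$, dominated convergence would fail, and the clean $h^2$ scaling of $G_n'(h+)$ against the $\Theta(1)$ size of the secant slope, which drives the whole argument, would collapse.
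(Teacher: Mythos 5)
Your proposal is correct and follows essentially the same route as the paper: reduce to a kink check at $t = h = 2^{-n}T$ after noting that $G_n$ is linear on $[0,h]$ and, on $[h,\infty)$, a nonnegatively weighted mixture of the convex translates $G(\cdot+hs)$. The only cosmetic difference is that the paper proves the one-sided bounds $G_{n,+}'(h)\ge h^2 G'_+(0)$ and $G_{n,-}'(h)\le -\gamma/2 + hG(0+)$ directly and lets $n$ grow, whereas you phrase the same estimates as asymptotics $G_n'(h+)\to 0$ and $(G_n(h)-G_n(0))/h\to -\gamma/2$; the comparison that closes the argument is identical.
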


\begin{proof}On $[0,2^{-n}T],$ the function~$G_n$ is linear.
	On $[2^{-n}T,\infty)$, it is a mixture 
	of the convex, nonincreasing, and nonnegative functions $G(\cdot+2^{-n}Ts)$ for $-1 \le s \le 1$. 
	Hence, $G_n$ also has these properties on $[2^{-n}T,\infty)$.	We conclude that~$G_n$ is convex if and only if the  left-hand derivative~$G_{n,-}'(2^{-n}T)$ of~$G_n$ in~$2^{-n}T$ 
	is smaller than or equal to the right-hand derivative~$G_{n,+}'(2^{-n}T).$
	The convexity of $G$ and dominated convergence imply that 
	\begin{align*}
	G_{n,+}'(2^{-n}T)&=\lim_{t\downarrow 0}\frac{G_n(t+2^{-n}T)-G_n(2^{-n}T)}{t}\\
		&=2^{-2n}T^2\int_{-1}^1G'_+(2^{-n}T(1+s))(1-|s|)\dif s\\
		&\ge 2^{-2n }T^2G'_+(0).
	\end{align*}
	On the other hand, since $G$ is nonnegative and nonincreasing,
	\begin{align*}
	G_{n,-}'(2^{-n}T)&=\frac{G_n(2^{-n}T)-G_n(0)}{2^{-n }T}\\
	&=-\frac\gamma 2+2^{-n}T\int_{-1}^1\big(G(2^{-n}T(1+s))-G(2^{-n}T|s|)\big)(1-|s|)\dif s\\
	&\le -\frac\gamma 2+2^{-n}TG(0+)
	\end{align*}
	Thus, if $n$ is sufficiently large, then the term $\gamma/2$ becomes dominant and ensures the convexity of $G_n$. 
\end{proof}

	\begin{proof}[Proof of Proposition~\ref{GSS prop}]
 The uniqueness of minimizers follows immediately from the fact that $J_\gamma$ is strictly convex by \eqref{eq: Jgamma Fourier rep}. 
To show the existence of a nonnegative minimizer, we  consider  first  the case in which both $G(0+)$ and the right-hand derivative $G'_+(0)$ are finite. When letting $G(0):=G(0+)$, the function $G(|\cdot|)$ is  a bounded and continuous function on $\mathbb{R}$. 

	Now consider the problem of minimizing $J_\gamma[\phi]$ over $\phi\in \Phi_1^{(n)}$. 
	By Lemma~\ref{n quadratic form lemma}, this problem is equivalent to the minimization of the quadratic form 
	$\sum_{i,j=0}^{2^n} \phi_i \phi_j G_n(|t_i-t_j|)$ over $\phi_0, \dots, \phi_{2^n} \in \mathbb{R}$ that sum up to $2^n/T$.  The fact that $G_n$ is convex, nonincreasing, nonnegative, and nonconstant implies that the matrix with entries 
	$ G_n(|t_i-t_j|)$ is positive definite due to \eqref{eq: J0 Fourier rep}. Thus, our minimization problem has a unique minimizer as soon as $n \ge n_0$. 
	Moreover, by Theorem~1 in \cite{ASS}, together with Lemmas~\ref{n quadratic form lemma} and~\ref{convex Gn lemma}, all components $\phi_k$ of this minimizer will be nonnegative. 
	Thus, also the problem of minimizing $J_\gamma[\phi]$ over $\phi \in \Phi_1^{(n)}$ has a unique minimizer $\phi^{(n)}$, 
	which is nonnegative, as soon as $n\ge n_0$.
	
	Next, since $\Phi_1^{(n)}\subset\Phi_1^{(n+1)}$, we have $J_\gamma[\phi^{(n_0)}] \ge J_\gamma[\phi^{(n)}]$ for all $n\ge n_0$. 
	Since moreover $J_\gamma[\phi] \ge \frac{\gamma}{2}\|\phi\|^2$ due to \eqref{eq: J0 Fourier rep},
	we get that the $L^2$-norms $\|\phi^{(n)}\|$ are uniformly bounded for all $n \ge n_0$. By passing to a subsequence if necessary, we may therefore assume that 
	the sequence $(\phi^{(n)})_{n\ge n_0}$ converges weakly in $L^2[0,T]$ to some nonnegative limit $\phi^*$.

	We claim that $\phi^*$ is the  minimizer of $J_\gamma[\phi]$ 
	over $\phi \in \Phi_1$. To see this, let us assume by way of contradiction that $\phi^*$ is not the minimizer. Then there exists another function $\hat\phi\in\Phi_1$ with $J_\gamma[\hat\phi]<J_\gamma[\phi^*]$. By $\hat \phi_n$ we denote the conditional expectation 	
	of $\hat\phi$ with respect to the $\sigma$-field on $[0,T]$ generated by the intervals $[t_0,t_1),\dots, [t_{2^n-1},t_{2^n})$ 
	and under the (normalized) Lebesgue measure. Then $\hat\phi_n $ belongs to $\Phi_1^{(n)}$, 
	and so $J_\gamma[\hat\phi_n] \ge J_\gamma[\phi^{(n)}]$.
	By martingale convergence we have $\hat\phi_n\to\hat\phi$ in $L^2[0,T]$. 	The fact that $G(|\cdot|)$ is bounded and continuous   gives $J_\gamma[\hat\phi_n] \to J_\gamma[\hat\phi]$. On the other hand,  the map $\phi \mapsto J_\gamma[ \phi ]$ is weakly lower semicontinuous, and so 
	$$J_\gamma[\phi^*] \le \liminf_{n\uparrow\infty} J_\gamma[\phi^{(n)}]\le\lim_{n\uparrow\infty} J_\gamma[\hat\phi_n] =J_\gamma[\hat\phi],
	$$
which is a contraction. Therefore, the nonnegative function $\phi^*$ is indeed the minimizer.

Let us now consider the case $G(0+)=\infty$. As in the proof of Theorem 2.24 of \cite{Gatheral2012}, we can consider approximations $G^{(n)}$ of $G$ defined through the measures 
\begin{equation}\label{eq:etan}
\eta_n(dy)=\Ind{(1/n,\infty)}(y)\,\eta(dy)
\end{equation} 
in \eqref{psietaRelationEq}. These functions $G^{(n)}$ are then continuous, nonincreasing, nonnegative, convex and they satisfy $G^{(n)}(0+)<\infty$ and $(G^{(n)})'_+(0)>-\infty$. They correspond to functions $g_n$ defined as in \eqref{eq:g function} and  energy functionals $J_\gamma^{(n)}$ satisfying \eqref{eq: Jgamma Fourier rep} for $\nu_n(dx)=G(\infty-)\, \delta_0(dx)+g_n(x)\dif x$. 
Then let $(\psi_n)_{n=1,2,\dots}$ be a minimizing sequence for $J_\gamma$ in $\Phi_1$. Since $g(x)\ge g_n(x)$, we have $J_\gamma[\psi_n]\ge J_\gamma^{(n)}[\psi_n]$.  For each $n$, we take moreover a minimizer $\phi_n$ of $J^{(n)}_\gamma$ in $\Phi_1$. Since $G^{(n)}(0+)<\infty$ and $(G^{(n)})'_+(0)>-\infty$, we already know from the first part of this proof that $\phi_n\ge0$. Moreover, we have  $J_\gamma[\psi_n]\ge J_\gamma^{(n)}[\psi_n]\ge J_\gamma^{(n)}[\phi_n] $. This implies that $\gamma\int_0^T\phi_n(t)^2\dif t$ is uniformly bounded in $n$, and so, after passing to a subsequence if necessary, we may assume that the sequence $(\phi_n)$ converges weakly in $L^2[0,T]$ to a function $\phi\in L^2[0,T]$, which must also be nonnegative.  Due to the compactness of $[0,T]$, it follows that the Fourier transforms $\widehat\phi_n$ converge pointwise to $\widehat\phi$.  Since moreover  $g_n$ increases pointwise to the function $g$  from \eqref{eq:g function}, we get
\begin{align*}
\inf_{\psi\in\Phi_1}J_\gamma[\psi]&=\lim_{n\uparrow\infty}J_\gamma[\psi_n]\ge\liminf_{n\uparrow\infty}J_\gamma^{(n)}[\phi_n]\\&=\liminf_{n\uparrow\infty}\bigg(\frac\gamma2\int|\widehat\phi_n(z)|^2\,dz+G^{(n)}(\infty-)|\widehat\phi_n(0)|^2+\frac12\int|\widehat\phi_n(z)|^2g_n(z)\dif z\bigg)\\
&\ge J_\gamma[\phi],\end{align*}
where we have used Fatou's lemma in the final step. This shows that the function $\phi$ is the desired nonnegative minimizer.\end{proof}

\subsection{Proof of Proposition~\ref{Fredholm prop}}

 For $f,g\in L^2_G[0,T]$, we can define the symmetric bilinear form
$$J_\gamma[f,g]:=\frac12\big(J_\gamma[f+g]-J_\gamma[f]-J_\gamma[g]\big).
$$
Now suppose $\phi$ solves \eqref{eq: J gamma}. We take a nonzero function $\psi\in   L^2_G[0,T]$ such that $\int_0^T\psi(t)\dif t=0$ and $\alpha\in\mathbb{R}$. Then $\phi+\alpha\psi\in\Phi_1$ and 
$$J_\gamma[\phi+\alpha\psi]=J_\gamma[\phi]+\alpha^2 J_\gamma[\psi]+2\alpha J_\gamma[\phi,\psi].
$$
The optimality of $\phi$ implies that the right-hand side is minimized at $\alpha=0$, which implies that $J_\gamma[\phi,\psi]=0$. Thus, $\gamma\phi(t)+\int_0^TG(|t-s|)\phi(s)\dif s$ must be orthogonal to every $\psi\in   L^2_G[0,T]$ with  $\int_0^T\psi(t)\dif t=0$, which gives \eqref{Fredholm eq}. 

Conversely,  \eqref{Fredholm eq} implies that $J_\gamma[\phi,\psi]=0$ for every $\psi\in   L^2_G[0,T]$ with  $\int_0^T\psi(t)\dif t=0$. For every $\tilde\phi\in\Phi_1$ and $\psi:=\tilde\phi-\phi$, 
$$J_\gamma[\tilde\phi]=J_\gamma[\phi]+J_\gamma[\psi]+2J_\gamma[\phi,\psi]\ge J_\gamma[\phi],
$$
and so $\phi$ solves \eqref{eq: J gamma}. Finally, it is clear that $J_\gamma[\phi]=\sigma/2$ if $\phi$ solves \eqref{Fredholm eq}.
\qed

\subsection{Proof of Theorem~\ref{th:completemonotonicity}}

\subsubsection{Proof of Theorem~\ref{th:completemonotonicity} for exponential kernels}
\label{exponentialkernels}

Assume first that $G$ is an {\em exponential kernel (of order $n$)},
i.e., there are $a_1, a_2, \dots, a_n > 0$ and $b_n > b_{n-1} > \dots > b_1 > 0$ such that
\begin{equation}
\label{eq:exponentialkernel}
	G(t) = \sum_{k=1}^n a_k e^{-\sqrt{b_k} t}.
\end{equation}
Clearly, any such $G$ is completely monotone and satisfies \eqref{eq: weakly singular G def}.
Let $\phi$ be the unique minimizer of \eqref{eq: J gamma}.
By Proposition~\ref{Fredholm prop} there is a $\sigma > 0$ such that $\phi$ solves \eqref{Fredholm eq}.
By scaling $\phi$ and $G$, we may assume without loss of generality that $\sigma=\gamma$. 

All matrices considered in this proof are $n$-dimensional square matrices, 
and all vectors $n$-dimensional column vectors.
We denote the diagonal matrix with $x_1, \dots, x_n$ on its main diagonal as
$\diag(x_i)_{i=1,\dots,n}$, and say that a matrix is a {\em positive diagonal matrix} if it is diagonal and all diagonal entries are positive.

Let $A \coloneqq \diag(a_i)_{i = 1, \dots, n}$ and $B \coloneqq \diag(b_i)_{i = 1, \dots, n}.$
Define the function $\psi = (\psi_1, \psi_2, \dots, \psi_n)$ via
$$
	\psi_k(t) \coloneqq a_k \int_{0}^{T} e^{-\sqrt{b_k} |t-s|} \phi(s) \dif s, 
		\qquad t \in [0,T], 
		\quad k =1, 2, \dots, n.
$$
Then
\begin{equation}
\label{eq:phipsi}
	\phi = 1 - \lambda \sum_k \psi_k = 1 - \lambda\, \one^{\top} \psi,
\end{equation}
where $\lambda \coloneqq 1/\gamma$ and $\one \coloneqq (1, 1, \dots, 1) \in \mathbb{R}^n.$

Let us first give an outline of the proof:
\begin{enumerate}[label={\bf {\arabic*.}},topsep=0pt,itemsep=-1ex,partopsep=1ex,parsep=1ex]
	\item Show that $\psi$ solves a system of $n$ ordinary differential equations $\psi'' = M\psi - 2 A B^{1/2} \one$ with boundary conditions
		$\psi(0) = \psi(T)$ and $\psi'(0) = B^{1/2} \psi(0).$
		Here, $M$ is a nonsingular matrix.
	\item Show that $M$ has $n$ distinct, real eigenvalues $c_n > c_{n-1} > \dots > c_1 > 0.$
		Let $C \coloneqq \diag(c_i)_{i = 1, \dots, n}.$
		Obtain an eigendecomposition $M = Q C Q^{-1},$
		where $Q$ is a nonsingular matrix.
	\item Conclude with 1. that
		\begin{equation}\label{eq:phi}
			\phi(t) =  d\, \big( 1 + 2\lambda\, \one^{\top} \big( e^{M^{1/2} t} + e^{M^{1/2} (T-t)} \big) N^{-1} \one \big),
		\end{equation}
		where $d > 0$ and $N$ is a nonsingular matrix.
	\item Use the eigendecomposition of $M$ to rewrite \eqref{eq:phi} as
		\begin{equation}\label{eq:phi2}
			\phi(t) = d\, \big( 1 +  \one^{\top} E(t) \tilde{N}^{-1} \one \big).
		\end{equation}
		Here, $\tilde{N}$ is a nonsingular matrix. The matrices $E(t)$ are positive diagonal matrices, and each diagonal entry of the mapping $t \mapsto E(t)$ is symmetrically totally monotone.
	\item Decompose $\tilde{N}^{-1} = \tilde{N}_1 (\tilde{N}_2 + \tilde{N}_3)^{-1} \tilde{N}_4$ such that
		$\tilde{N}_1$ and $\tilde{N}_3$ are positive diagonal matrices, 
		$\tilde{N}_2$ is positive definite,
		and all off-diagonal entries of $(\tilde{N}_2 + \tilde{N}_3)^{-1}$ are nonpositive.
	\item Show that all entries of $\tilde{N}_2^{-1} \tilde{N}_4\, \one$ are nonnegative.
		Show that this implies that all entries of $\tilde{N}^{-1}\one = \tilde{N}_1 (\tilde{N}_2 + \tilde{N}_3)^{-1} \tilde{N}_4\, \one$ 
		are nonnegative.
	\item Conclude with \eqref{eq:phi2} and Step {\bf 6} that $\phi$ is symmetrically totally monotone.
\end{enumerate}

\medskip
\medskip

{\bf 1.} Recall that $A = \diag(a_i)_{i = 1, \dots, n}$ and $B = \diag(b_i)_{i = 1, \dots, n}$ are positive diagonal matrices,
and that $b_n > b_{n-1} > \dots > b_1.$
Notice that $\one\one^{\top}$ is the matrix containing all ones.
Define
\begin{equation}\label{eq:M matrix}
	M 
		\coloneqq B + 2 \lambda\, A B^{1/2} \one\one^{\top} 
		= \begin{pmatrix} 
			b_1 + 2 \lambda a_1 \sqrt{b_1} & 2\lambda a_1 \sqrt{b_1} & \cdots & 2\lambda a_1 \sqrt{b_1} \\
			2\lambda a_2 \sqrt{b_2} & b_2 + 2\lambda a_2 \sqrt{b_2} & \cdots & 2\lambda a_2 \sqrt{b_2} \\
			\vdots & \vdots & \ddots & \vdots \\
			2\lambda a_n \sqrt{b_n} & 2\lambda a_n \sqrt{b_n} & \cdots & b_n + 2\lambda a_n \sqrt{b_n}
		\end{pmatrix}.
\end{equation}

\begin{enumerate}[label={\bf 1.{\arabic*}}, wide, labelwidth=!, labelindent=0pt]
	\item 
	{\em $\psi$ solves the  system of $n$ ordinary differential equations $\psi'' = M\psi - 2 A B^{1/2} \one.$}\\
		Let $t \in [0,T]$ and $k=1, 2, \dots, n.$
		Differentiating and plugging in from \eqref{eq:phipsi} shows
		\begin{align*}
			\psi_k''(t) 
				&= a_k \sqrt{b_k} \, \frac{\dif}{\dif t}\Big[ - \int_0^t e^{-\sqrt{b_k} (t-s)} \phi(s) \dif s + \int_t^T e^{-\sqrt{b_k}(s-t)} \phi(s) \dif s \Big]\\
			&= a_k \sqrt{b_k} \, \Big(\sqrt{b_k} \int_0^T e^{-\sqrt{b_k}|t-s|} \phi(s) \dif s - 2 \phi(t) \Big)\\
			&= b_k \psi_k(t) - 2 a_k \sqrt{b_k} \phi(t)\\
			&= b_k \psi_k(t) - 2 a_k \sqrt{b_k} \Big( 1 - \lambda \sum_l \psi_l(t) \Big).
		\end{align*}
		We conclude 
		$
			\psi'' 
				= (B + 2\lambda\, A B^{1/2} \one \one^{\top}) \psi - 2 A B^{1/2} \one = M \psi - 2 A B^{1/2} \one.
		$ 
	\item\label{step:boundary1}
	{\em $\psi(0) = \psi(T)$.}\\
		Recall that $\phi(t) = \phi(T-t)$ for all $t \in [0,T].$
		Let $t \in [0,T]$ and $k = 1, 2, \dots, n.$
		Integration by substitution shows
		\begin{align*}
			\psi_k(t) 
				&= a_k \int_0^T e^{-\sqrt{b_k}|t-s|} \phi(s) \dif s\\
			&= a_k \int_0^T e^{-\sqrt{b_k} |(T-t) - (T-s)|} \phi(T-s) \dif s\\
			&= a_k \int_0^T e^{-\sqrt{b_k} |(T-t) - s| } \phi(s) \dif s\\
			&= \psi_k(T-t).
		\end{align*}
		In particular, $\psi_k(0) = \psi_k(T).$
	\item\label{step:boundary2}
	{\em $\psi'(0) = B^{1/2} \psi(0).$}\\
		Let $k = 1, 2, \dots, n.$
		Then
		\begin{align*}
			\psi_k'(0) 
				&= a_k \sqrt{b_k} \Big[ - \int_0^t e^{-\sqrt{b_k} (t-s)} \phi(s) \dif s + \int_t^T e^{-\sqrt{b_k}(s-t)} \phi(s) \dif s \Big]_{t=0}\\
			&= a_k \sqrt{b_k} \int_0^T e^{-\sqrt{b_k} s} \phi(s) \dif s\\
			&= \sqrt{b_k} \, \psi_k(0).
		\end{align*}
\end{enumerate}

\medskip

\begin{enumerate}[label={\bf 2.{\arabic*}}, wide, labelwidth=!, labelindent=0pt]
	\item\label{step:eigenvalues}
	{\em $M$ has $n$ distinct, real eigenvalues $c_1, c_2, \dots, c_n$ that satisfy $c_n > b_n > c_{n-1} > b_{n-1} > \dots > c_1 > b_1 > 0.$}\\
		Let 
		$v \coloneqq 2\lambda (a_1 \sqrt{b_1}, a_2 \sqrt{b_2}, \dots, a_n \sqrt{b_n}) \in \mathbb{R}^n$
		and $x \in [0,\infty) \setminus \{b_1, b_2, \dots, b_n\}.$
		 The matrix $xI-M$ is the sum of a diagonal matrix and the outer product $v\one^\top$. 
		Hence
		\begin{align*}
			\det(x I - M) 
				&=\det(x I - B -v \one^{\top})\\
			&= \big( 1 - v^{\top} (xI-B)^{-1} \one \big) \det(x I - B)\\
			&= \Big( 1 - 2\lambda \sum_k \frac{a_k \sqrt{b_k}}{x - b_k} \Big) \prod_k (x - b_k).
		\end{align*}
		The following argument is due to \cite{Terrell2017}.
		Define $f \colon [0,\infty) \setminus \{b_1, b_2, \dots, b_n\} \to \mathbb{R}$ via
		$$
			f(x) 
				\coloneqq 1 - 2\lambda \sum_k \frac{a_k \sqrt{b_k}}{x - b_k}.
		$$
		Let $k = 1, 2, \dots, n-1.$
		Then $f$ is continuous on $(b_k, b_{k+1})$, with
		$$
			\lim_{x \searrow b_k} f(x) 
				= - \infty
			\qquad \text{ and } \qquad
			\lim_{x \nearrow b_{k+1}} f(x) 
				= + \infty.
		$$
		We conclude that $f$ has a root $c_k \in (b_k, b_{k+1})$.
		Furthermore, 
		$$
			\lim_{x \searrow b_n} f(x) 
				= - \infty
			\qquad \text{ and } \qquad
			\lim_{x \nearrow +\infty} f(x) 
				= 1,
		$$ 
		showing that $f$ has another root $c_n \in (b_n, +\infty).$
		Since $\det(c_k I - M) = 0$ for $k=1, 2, \dots, n,$ each $c_k$ is an eigenvalue of $M$.
	\item\label{step:eigenvectors}
	{\em If $c$ is an eigenvalue of $M$, then 
		$$
			\Big( \frac{a_1 \sqrt{b_1}}{c-b_1} , \frac{a_2 \sqrt{b_2}}{c-b_2}, \dots, \frac{a_n \sqrt{b_n}}{c-b_n}\Big)
		$$ 
		is a corresponding eigenvector.}\\
		Let $c \in [0,\infty) \setminus \{b_1, \dots, b_n\}$ be an eigenvalue of $M$, 
		and $v = (v_1, v_2, \dots, v_n)$ a corresponding eigenvector.
		The definition $Mv = cv$ translates into the following system of equations:
		$$
			b_k v_k + 2\lambda a_k \sqrt{b_k} \sum_l v_l 
				= c v_k, 
				\qquad k = 1, 2, \dots, n.
		$$
		It must be true that $\sum_l v_l \neq 0.$
		Otherwise, $b_k v_k = c v_k$ for all $k=1, 2, \dots, n.$
		Since $c \notin \{b_1, b_2, \dots, b_n\}$ (see Step~\ref{step:eigenvalues}), 
		this implies $v = 0,$ which contradicts the definition of an eigenvector.
		Hence we may set $\one^{\top} v = \sum_l v_l = \frac1{2\lambda}$ without loss of generality.
		We obtain
		$$
			v
				= \Big( \frac{a_1 \sqrt{b_1}}{c - b_1}, \frac{a_2 \sqrt{b_2}}{c - b_2}, \dots, \frac{a_n \sqrt{b_n}}{c - b_n} \Big).
		$$
\end{enumerate}

Let $c_n > c_{n-1} > \dots > c_1 > 0$ be the eigenvalues of $M$.
Define $C \coloneqq \diag(c_i)_{i = 1, \dots, n},$
\begin{equation}
\label{eq:Q}
	\tilde{Q} \coloneqq \Big(\frac1{c_j - b_i}\Big)_{i,j = 1, 2, \dots, n}
	\qquad\text{ and }\qquad
	Q \coloneqq A B^{1/2} \tilde{Q}.
\end{equation}

\begin{enumerate}[label={\bf 2.{\arabic*}}, resume, wide, labelwidth=!, labelindent=0pt]
	\item\label{step:eigendecomposition}
	{\em $M = Q C Q^{-1}.$}\\
		By Step~\ref{step:eigenvectors}, the columns of $Q$ are eigenvectors corresponding to the eigenvalues $c_1, c_2, \dots, c_n$.
		Eigenvectors corresponding to different eigenvalues are linearly independent, hence $Q$ is nonsingular.
		We obtain the eigendecomposition
		$
			M 
				= Q C Q^{-1}.
		$
	\item\label{step:colsumsQ}
	{\em $\one^{\top} Q = \frac1{2\lambda} \one^{\top}$.}\\
		This follows from Step~\ref{step:eigenvectors},
		where we assumed that each eigenvector contained in $Q$ sums to $\frac1{2\lambda}.$
\end{enumerate}

\medskip

{\bf 3.}
Define
$$
	d 
		\coloneqq \Big( 1 + 2\lambda \sum_k \frac{a_k}{\sqrt{b_k}} \Big)^{-1} > 0.
$$
We let
$
	M^{1/2} \coloneqq Q \diag(\sqrt{c_i})_{i = 1, \dots, n}\, Q^{-1}
$
and denote by
$
	e^{M^{1/2} T} = Q \diag(e^{\sqrt{c_i} T})_{i = 1, \dots, n}\, Q^{-1}
$
the matrix exponential of $M^{1/2} T.$
Define
$$
	N 
		\coloneqq A^{-1} \Big( M^{1/2} \big( e^{M^{1/2} T} - I \big) + B^{1/2} \big( e^{M^{1/2} T} + I \big) \Big),
$$
where $I$ denotes the identity matrix.

The general solution to the system of $n$ ordinary differential equations 
$
	f'' 
		= M f - 2 A B^{1/2} \one
$ 
is
$$
	f(t) 
		= e^{M^{1/2} t} x_0 + e^{M^{1/2} (T-t)} x_1 + 2 d A B^{-1/2} \one,
		\qquad t \in [0,T],
$$
for $x_0, x_1 \in \mathbb{R}^n.$
To see this, let $t \in [0,T]$ and $x_0, x_1 \in \mathbb{R}^n.$
Writing $d = 1 / (1 + 2\lambda\, \one^{\top} A B^{-1/2} \one)$ shows
\begin{align*}
	d\, M A B^{-1/2} \one 
		&= d\, \big( A B^{1/2} \one + 2\lambda\, A B^{1/2} \one\,\one^{\top} A B^{-1/2} \one \big)\\
	&= d\, \Big( 1 + \frac1d - 1 \Big) A B^{1/2}\one\\
	&= A B^{1/2}\one.
\end{align*}
Therefore,
\begin{align*}
	f''(t) 
		&= M \big( e^{M^{1/2} t} x_0 + e^{M^{1/2} (T-t)} x_1 \big)\\
	&= M f(t) - 2 d\, M A B^{-1/2} \one\\
	&= M f(t) - 2 A B^{1/2} \one.
\end{align*}

It remains to choose $x_0$ and $x_1$ in such a way that the boundary conditions from Steps~\ref{step:boundary1} and~\ref{step:boundary2}
are satisfied.
First,
$
	f(0) - f(T) 
		= ( e^{M^{1/2} T} - I ) (x_1 - x_0).
$
By Step~\ref{step:eigendecomposition},
\begin{align*}
	e^{M^{1/2} T} - I 
		&= Q \diag\big(e^{\sqrt{c_i} T}\big)_{i = 1, \dots, n}\, Q^{-1} - I\\
	&= Q \diag\big(e^{\sqrt{c_i} T} - 1\big)_{i = 1, \dots, n}\, Q^{-1}
\end{align*}
is nonsingular.
Hence $f(0) = f(T)$ if and only if $x_0 = x_1.$
Set $x_0 = x_1.$
Second,
\begin{align*}
	f'(0) - B^{1/2} f(0) 
		&= \big( M^{1/2} \big( I - e^{M^{1/2} T} \big) - B^{1/2} \big( I + e^{M^{1/2} T} \big) \big) x_0 - 2d A \one\\
	&= - A ( N x_0 + 2d\, \one).
\end{align*}
We show in Step~\ref{step:Nnonsingular} that $N$ is nonsingular.
Hence, $f'(0) = B^{1/2} f(0)$ if and only if $x_0 = -2d\, N^{-1} \one.$
We conclude
\begin{align*}
	\psi(t) 
		&= e^{M^{1/2} t} x_0 + e^{M^{1/2} (T-t)} x_1 + 2d\, A B^{-1/2} \one\\
	&= \big( e^{M^{1/2} t} + e^{M^{1/2} (T-t)} \big) x_0 + 2d\, A B^{-1/2} \one\\
	&= 2d\, \big( A B^{-1/2} - \big( e^{M^{1/2} t} + e^{M^{1/2} (T-t)} \big) N^{-1} \big) \one
\end{align*}
for all $t \in [0,T].$
Notice that 
$$
	1 - 2d \lambda\, \one^{\top} A B^{-1/2} \one 
		= 1 - d \Big( \frac1d - 1 \Big) 
		= d,
$$
so
\begin{align*}
	\phi(t) 
		&= 1 - \lambda\, \one^{\top} \psi(t)\\
	&= 1 - 2d \lambda\, \one^{\top} A B^{-1/2} \one + 2d \lambda\, \one^{\top} \big( e^{M^{1/2} t} + e^{M^{1/2} (T-t)} \big) N^{-1} \one\\
	&= d\, \big( 1 + 2\lambda\, \one^{\top} \big( e^{M^{1/2} t} + e^{M^{1/2} (T-t)} \big) N^{-1} \one \big)
\end{align*}
for all $t \in [0,T].$

\medskip

{\bf 4.}
Define
$$
	E(t) 
		\coloneqq \diag \Big( \frac{e^{\sqrt{c_i} t} + e^{\sqrt{c_i} (T-t)} }{e^{\sqrt{c_i}T} - 1}\Big)_{i = 1, \dots, n,}
		\qquad t \in [0,T],
$$
and
$$
	\tilde{N}
		\coloneqq A^{-1} \big( Q C^{1/2} + B^{1/2} Q E(T) \big).
$$
$E(t)$ is a positive diagonal matrix for all $t \in [0,T]$ and thus nonsingular. The diagonal entries of the mapping $t \mapsto E(t)$ are symmetrically totally monotone.
Using Step~\ref{step:eigendecomposition}, we obtain
\begin{align*}
	N &= A^{-1} \big( Q C^{1/2} \diag \big( e^{\sqrt{c_i} T} - 1\big)_{i = 1, \dots, n}\, Q^{-1}
		+ B^{1/2} Q \diag \big( e^{\sqrt{c_i} T} + 1\big)_{i = 1, \dots, n}\, Q^{-1} \big)\\
	&= A^{-1} \big( Q C^{1/2} + B^{1/2} Q E(T) \big) \diag \big( e^{\sqrt{c_i} T} - 1 \big)_{i = 1, \dots, n}\, Q^{-1}\\
	&= \tilde{N} \diag \big(e^{\sqrt{c_i} T} - 1 \big)_{i = 1, \dots, n}\, Q^{-1}.
\end{align*}
Hence $N$ is nonsingular if and only if $\tilde{N}$ is nonsingular.
This, in combination with Steps~\ref{step:eigendecomposition},~\ref{step:colsumsQ} and {\bf 3}, shows
\begin{align*}
	\phi(t) 
		&= d\, \big( 1 + 2\lambda\, \one^{\top} \big( e^{M^{1/2} t} + e^{M^{1/2} (T-t)} \big) N^{-1} \one \big)\\
	&= d\, \big( 1 + 2\lambda\, \one^{\top} Q \diag\big( e^{\sqrt{c_i} t} + e^{\sqrt{c_i} (T-t)} \big)_{i = 1, \dots, n}\, Q^{-1} N^{-1} \one \big)\\
	&= d\, \big( 1 +  \one^{\top} E(t) \tilde{N}^{-1} \one \big)
\end{align*}
for all $t \in [0,T].$

\medskip
{\bf 5.}
Define the real-valued functions
$$
	\beta(x) \coloneqq \prod_l (x-b_l),
	\qquad
	\gamma(x) \coloneqq \prod_l (x-c_l).
$$
Let
$$
	D_1 \coloneqq \diag \Big( \frac{\beta(c_i)}{\gamma'(c_i)}\Big)_{i = 1, \dots, n,}
	\qquad\text{and}\qquad
	D_2 \coloneqq \diag \Big( \hspace{-.1cm}-\hspace{-.1cm}  \frac{\gamma(b_i)}{\beta'(b_i)}\Big)_{i = 1, \dots, n.}
$$
We show in Step~\ref{step:D1D2positive} that $D_1$ and $D_2$ are positive diagonal matrices.
In particular, they are nonsingular.

\begin{enumerate}[label={\bf 5.{\arabic*}}, wide, labelwidth=!, labelindent=0pt]
	\item\label{step:Qinverse}
	{\em $\tilde{Q}^{-1} = D_1 \tilde{Q}^{\top} D_2$ and $\tilde{Q}^{-1} \one = D_1 \one.$}\\
		The matrix $-\tilde{Q}$ as defined in \eqref{eq:Q} is known as a Cauchy matrix.
		Both results are due to \cite{Schechter1959}.
	\item\label{step:D1D2positive}
	{\em $D_1$ and $D_2$ are positive diagonal matrices.}\\
		Let $k = 1, 2, \dots, n.$
		Then
		$$
			\frac{\beta(c_i)}{\gamma'(c_i)}
				= \frac{\prod_l (c_i - b_l)}{\sum_m \prod_{l \neq m} (c_i - c_l)}
				= \frac{\prod_l (c_i - b_l)}{\prod_{l \neq i} (c_i - c_l)}
				= (c_i - b_i) \prod_{l \neq i} \frac{c_i - b_l}{c_i - c_l}. 
		$$
		Recall from Step~\ref{step:eigenvalues} that $c_i > b_i$, and that $c_i > b_l$ if and only if $c_i > c_l$
		for all $l = 1, 2, \dots, n.$
		Similarly,
		$$
			- \frac{\gamma(b_i)}{\beta'(c_i)} = (c_i - b_i) \prod_{l \neq i} \frac{b_i - c_l}{b_i - b_l} > 0.
		$$
\end{enumerate}

Define
$$
	\tilde{N}_1 \coloneqq C^{-1/2},
	\quad
	\tilde{N}_2 \coloneqq \tilde{Q}^{\top} D_2 B^{-1/2} \tilde{Q},
	\quad
	\tilde{N}_3 \coloneqq D_1^{-1} E(T) C^{-1/2},
	\quad
	\tilde{N}_4 \coloneqq \tilde{Q}^{\top} D_2 B^{-1}.
$$
All four matrices are nonsingular (see Steps~\ref{step:Qinverse} and~\ref{step:D1D2positive} in particular).
We show in Step~\ref{step:Nnonsingular} that $\tilde{N}_2 + \tilde{N}_3$ is nonsingular.

\begin{enumerate}[label={\bf 5.{\arabic*}}, resume, wide, labelwidth=!, labelindent=0pt]
	\item
	{\em $\tilde{N}^{-1} = \tilde{N}_1 (\tilde{N}_2 + \tilde{N}_3)^{-1} \tilde{N}_4.$}\\
	By definition, $\tilde{Q} = A^{-1} B^{-1/2} Q.$
	Using Step~\ref{step:Qinverse}:
	\begin{align*}
		\tilde{N}_4^{-1} (\tilde{N}_2 + \tilde{N}_3) \tilde{N}_1^{-1}
			&= B D_2^{-1} \tilde{Q}^{-T} \big( \tilde{Q}^{\top} D_2 B^{-1/2} \tilde{Q} + D_1^{-1} E(T) C^{-1/2} \big) C^{1/2} \\
			&= \big( B^{1/2} \tilde{Q} C^{1/2} + B (D_1 \tilde{Q}^{\top} D_2)^{-1} E(T) \big) \\
			&= \big( A^{-1} Q C^{1/2} + B \tilde{Q} E(T) \big) \\
			&= A^{-1} \big( Q C^{1/2} + B^{1/2} Q E(T) \big) \\
			&= \tilde{N}.
	\end{align*}
	\item\label{step:N1N3positive,N2posdef}
	{\em $\tilde{N}_1$ and $\tilde{N}_3$ are positive diagonal matrices, and $\tilde{N}_2$ is positive definite.}\\
		$B, C$ and $E(T)$ are positive diagonal matrices.
		By Step~\ref{step:D1D2positive}, 
		the same is true for $D_1$ and $D_2$.
		Hence $D_2 B^{-1/2}$ is positive definite. 
		Since $\tilde{Q}$ is nonsingular (see Step~\ref{step:Qinverse}), 
		$\tilde{N}_2 = \tilde{Q}^{\top} D_2 B^{-1/2} \tilde{Q}$ is also positive definite.
	\item\label{step:Nnonsingular}
	{\em $\tilde{N}_2+\tilde{N}_3, \tilde{N}$ and $N$ are nonsingular.}\\
		It follows from Step~\ref{step:N1N3positive,N2posdef} that $\tilde{N}_2 + \tilde{N}_3$ is positive definite and hence nonsingular.			
		Since $\tilde{N}_1$ and $\tilde{N}_3$ are nonsingular, 
		$\tilde{N}$ is nonsingular.
		We have shown in Step {\bf 4} that $N$ is nonsingular if (and only if) $\tilde{N}$ is nonsingular.
\end{enumerate}

A square matrix is called a {\em $Z$-matrix} if all its off-diagonal entries are nonpositive.
Given that some matrix $U$ is a nonsingular $Z$-matrix, the following two conditions are equivalent:
\begin{enumerate}[label=(M{\arabic*})]
	\item\label{M1} 
		There exists a positive diagonal matrix $V$ such that $UV + VU^{\top}$ is positive definite.
	\item\label{M2} 
		$U$ is nonsingular and all entries of $U^{-1}$ are nonnegative.
\end{enumerate}
In this case, $U$ is called an {\em $M$-matrix}.
In particular, condition (M1) implies that every positive definite $Z$-matrix is an $M$-matrix.
See Theorem~2.3 in \cite{Berman1994} for proofs and further equivalent characterizations of $M$-matrices.

\begin{enumerate}[label={\bf 5.{\arabic*}}, wide, labelwidth=!, labelindent=0pt]
\setcounter{enumi}{5}
	\item\label{step:N2nonpositive}
	{\em $\tilde{N}_2^{-1}$ is a $Z$-matrix.}\\
		With Step~\ref{step:Qinverse}, we obtain
		$$
			\tilde{N}_2^{-1} 
				= \tilde{Q}^{-1} B^{1/2} D_2^{-1} \tilde{Q}^{-T} 
				= D_1 \tilde{Q}^{\top} D_2 B^{1/2} \tilde{Q} D_1.
		$$
		$D_1$ is a positive diagonal matrix, so it suffices to show that all off-diagonal entries of 
		$
			\tilde{Q}^{\top} D_2 B^{1/2} \tilde{Q}
		$ 
		are nonpositive.
		Fix $i,j \in \{1, 2, \dots, n\}$ such that $i\neq j.$
		Define
		$$
			\alpha 
				\coloneqq \big( \tilde{Q}^{\top} D_2 B^{1/2} \tilde{Q} \big)_{ij}
				= - \sum_k \frac{\sqrt{b_k}\, \gamma(b_k)}{(b_k-c_i) (b_k-c_j) \beta'(b_k)}
				= - \sum_k \frac{\sqrt{b_k} \prod_{l \neq i,j} (b_k - c_l)}{\prod_{l \neq k} (b_k - b_l)}.
		$$
		The following argument is due to \cite{Petrov2017}.
		Define $f \colon [0,\infty) \to \mathbb{R},$
		$$
			f(x) \coloneqq - \sqrt{x} \prod_{l \neq i,j} (x-c_l).
		$$
		There are positive constants $z_0, z_1, \dots, z_{n-2}$ such that
		$$
			f(x) = - \sum_{k=0}^{n-2} (-1)^{n-2-k} z_k\, x^{k+1/2}
				= \sum_{k=0}^{n-2} (-1)^{n-1-k} z_k\, x^{k+1/2}.
		$$
		Differentiating $n-1$ times yields
		$$
			f^{(n-1)}(x) 
				= \sum_{k=0}^{n-2} (-1)^{n-1-k} z_k\, x^{k - n + 3/2} \prod_{l=0}^{n-2} (k+1/2-l).
		$$
		For $k=0, 1, \dots, n-2,$ the factor $k+1/2-l$ is positive if $l=0, 1, \dots, k$ and negative if $l=k+1, k+2, \dots, n-2.$
		Hence
		\begin{align*}
			(-1)^{n-1-k} \prod_{l=0}^{n-2} ( k+1/2-l ) 
				&= (-1)^{n-1-k} (-1)^{ n-2-(k+1)+1 } \prod_{l=0}^{n-2} |k+1/2-l|\\ 
			&= - \prod_{l=0}^{n-2} |k+1/2-l| < 0.
		\end{align*}
		We conclude that $f^{(n-1)}(x) < 0$ for all $x > 0.$

		The Lagrange polynomial interpolation $p$ of $f$ in the points $b_1, b_2, \dots, b_n$ is
		$$
			p(x) 
				= \sum_k f(b_k) \prod_{l \neq k} \frac{x-b_l}{b_k - b_l}
				= \Big( - \sum_k \frac{ \sqrt{b_k} \prod_{l \neq i,j} (b_k - c_l) }{ \prod_{l \neq k} (b_k - b_l) } \Big) x^{n-1} + q(x)
				= \alpha \,x^{n-1} + q(x)
		$$
		for some polynomial $q$ of degree at most $n-2$.
		The interpolation is exact for $x = b_1, b_2, \dots, b_n$. 
		By Rolle's theorem, there is an $x_0 > 0$ such that
		$f^{(n-1)}(x_0) = p^{(n-1)}(x_0)$ \cite[Chapter~1]{Milne2000}.
		Hence
		$$
			0 > f^{(n-1)}(x_0)
				= p^{(n-1)} (x_0)
				= (n-1)! \, \alpha,
		$$
		showing that
		$
			\big( \tilde{Q}^{\top} D_2 B^{1/2} \tilde{Q} \big)_{ij} 
		$
		is nonpositive if $i \neq j.$
	
	\item\label{step:N2invMmatrix}
	
	{\em $\tilde{N}_2^{-1}$ is a nonsingular $M$-matrix.}\\
		We have shown in Step~\ref{step:N2nonpositive} that $\tilde{N}_2^{-1}$ is a $Z$-matrix.
		Since $\tilde{N}_2$ is positive definite by Step~\ref{step:N1N3positive,N2posdef},
		$\tilde{N}_2^{-1}$ is positive definite as well.
		Hence $\tilde{N}_2^{-1}$ is a nonsingular $M$-matrix by condition~\ref{M1}.
	
	\item\label{step:N2invN3inv}
	{\em All entries of $(\tilde{N}_2^{-1} + \tilde{N}_3^{-1})^{-1}$ are nonnegative.}\\
		As a positive diagonal matrix, $\tilde{N}_3^{-1}$ is positive definite and a nonsingular $M$-matrix.
		The sum of positive definite $Z$-matrices is again a positive definite $Z$-matrix.
		Hence $\tilde{N}_2^{-1} + \tilde{N}_3^{-1}$ is a positive definite $Z$-matrix (see Step~\ref{step:N2invMmatrix}); 
		and therefore an $M$-matrix.
		By condition~\ref{M2}, all entries of $(\tilde{N}_2^{-1} + \tilde{N}_3^{-1})^{-1}$ are nonnegative.
	
	\item\label{step:N2N3inv}
	{\em All off-diagonal entries of $(\tilde{N}_2 + \tilde{N}_3)^{-1}$ are nonpositive.}\\
		By the Woodbury matrix identity,
		$$
			(\tilde{N}_2 + \tilde{N}_3)^{-1}
				= \tilde{N}_3^{-1} - \tilde{N}_3^{-1} (\tilde{N}_2^{-1} + \tilde{N}_3^{-1})^{-1} \tilde{N}_3^{-1}.
		$$
		Recall that $\tilde{N}_3^{-1}$ is a positive diagonal matrix.
		It follows from Step~\ref{step:N2invN3inv} that all off-diagonal entries of 
		$\tilde{N}_3^{-1} (\tilde{N}_2^{-1} + \tilde{N}_3^{-1})^{-1} \tilde{N}_3^{-1}$
		are nonnegative.
\end{enumerate}

\medskip
\begin{enumerate}[label={\bf 6.{\arabic*}}, wide, labelwidth=!, labelindent=0pt]
	\item\label{step:N2invN4nonnegative}
	{\em All entries of $\tilde{N}_2^{-1} \tilde{N}_4\, \one$ are nonnegative.}\\
		Using Step~\ref{step:Qinverse}, we obtain
		\begin{align*}
			\tilde{N}_2^{-1} \tilde{N}_4\, \one 
				&= \tilde{Q}^{-1} B^{1/2} D_2^{-1} \tilde{Q}^{-T} \tilde{Q}^{\top} D_2 B^{-1} \one \\
			&= D_1 \tilde{Q}^{\top} D_2 B^{-1/2} \, \one \\
			&= D_1 \tilde{Q}^{\top} D_2 B^{-1/2} \tilde{Q} \, \tilde{Q}^{-1} \one \\
			&= D_1 \tilde{N}_2 D_1 \one.
		\end{align*}
		We have shown in Step~\ref{step:N2invMmatrix} that $\tilde{N}_2^{-1}$ is a nonsingular $M$-matrix.
		Hence all entries of $\tilde{N}_2$ are nonnegative by condition~\ref{M2}.
		The same is true for $D_1$ by Step~\ref{step:D1D2positive}.
	\item\label{step:Ntilde232nonnegative}
	{\em All entries of $(\tilde{N}_2 + \tilde{N}_3)^{-1} \tilde{N}_2$ are nonnegative.}\\
		Define $U \coloneqq (\tilde{N}_2 + \tilde{N}_3)^{-1} \tilde{N}_2$.
		Writing
		$$
			U = I - (\tilde{N}_2 + \tilde{N}_3)^{-1} \tilde{N}_3
		$$
		shows that all off-diagonal entries of $U$ are nonnegative (see Steps~\ref{step:N1N3positive,N2posdef} and~\ref{step:N2N3inv}).

		We now use the following result about positive definite matrices:
		If two matrices $U$ and $V$ are positive definite, then $U-V$ is positive definite if and only if $V^{-1} - U^{-1}$ is positive definite
		\cite[Corollary~7.7.4]{Horn2013}.
		The matrices $(\tilde{N}_2 + \tilde{N}_3), \tilde{N}_3$ and $(\tilde{N}_2 + \tilde{N}_3) - \tilde{N}_3 = \tilde{N}_2$ are positive definite 
		(see Step~\ref{step:N1N3positive,N2posdef}).
		Hence 
		$$
			\tilde{N}_3^{-1} - (\tilde{N}_2 + \tilde{N}_3)^{-1} = U \tilde{N}_3^{-1}
		$$
		is positive definite. 
		All entries on the main diagonal of a positive definite matrix are nonnegative.
		Therefore, all entries on $U$'s main diagonal are nonnegative.
	\item\label{step:Ntildenonnegative}
	{\em All entries of $\tilde{N}^{-1} \one$ are nonnegative.}\\
		All entries of $\tilde{N}_1, (\tilde{N}_2 + \tilde{N}_3)^{-1} \tilde{N}_2$ 
		and $\tilde{N}_2^{-1} \tilde{N}_4\, \one$ are nonnegative 
		(see Steps~\ref{step:N2invN4nonnegative} and~\ref{step:Ntilde232nonnegative}).
		Hence all entries of the product
		$$
			\tilde{N}_1 (\tilde{N}_2 + \tilde{N}_3)^{-1} \tilde{N}_2 \tilde{N}_2^{-1} \tilde{N}_4\, \one
				= \tilde{N}_1 (\tilde{N}_2 + \tilde{N}_3)^{-1} \tilde{N}_4\, \one
				= \tilde{N}^{-1} \one
		$$
		are nonnegative.
\end{enumerate}

\medskip
{\bf 7.}
Conclude with Steps {\bf 4} and~\ref{step:Ntildenonnegative} that there are $z_1, z_2, \dots, z_n \ge 0$ such that 
\begin{align*}
	\phi(t) &= d\, \big( 1 +  \one^{\top} E(t) \tilde{N}^{-1} \one \big) 
		= d \Big( 1 + \sum_{i=1}^n z_i \big( e^{\sqrt{c_i} t} + e^{\sqrt{c_i}(T-t)} \big) \Big), \qquad t \in [0,T].
\end{align*}
This function is clearly  symmetrically totally monotone.\qed

\subsubsection{Proof of Theorem~\ref{th:completemonotonicity} for  $\bm{G}$ arbitrary and $\bm{\gamma>0}$}

Let $G:(0,\infty)\to[0,\infty)$ be a nonconstant and  completely monotone kernel. We assume first that  $G(0+)<\infty$. Then we may assume  without loss of generality that $G(0):=G(0+)=1$. By Bernstein's theorem,
there exists a Borel probability measure $\mu$ on $[0,\infty)$ such that $G$ is equal to the Laplace transform of $\mu$.  Since the set of finite convex combinations of Dirac measures  is dense in the set of all Borel probability measures on $[0,\infty)$ with respect to weak convergence, there exists a corresponding sequence $(\mu_n)_{n=1,2,\dots}$ that converges weakly to $\mu$. Clearly, the corresponding Laplace transforms,
$$G_n(t)=\int_{[0,\infty)}e^{-tx}\,\mu_n(dx)\qquad\text{for $t\ge0$ and $n=1,2,\dots$}
$$
are all exponential kernels of type \eqref{eq:exponentialkernel}. The weak convergence $\mu_n\to\mu$ implies that $G_n(t)\to G(t)$ for all $t\ge0$. By slight abuse of notation, let us write 
$$J_\gamma^{(n)}[\phi]:=\frac\gamma2\int_0^T\phi(t)^2\dif t+\int_0^T\int_0^TG_n(|t-s|)\phi(s)\phi(t)\dif s\dif t$$
 for every $\gamma\ge0$
 and $\phi\in L^2[0,T]$. Then
\begin{align*}
\big|J_0[\phi]-J_0^{(n)}[\phi]\big|&\le \|\phi\|_{L^2[0,T]} \int_0^T   \bigg(\int_0^T\big(G(|t-s|)- G_n(|t-s|)\big)^2\dif s\bigg)^{1/2}|\phi(t)|\dif t\\
&\le 2\sqrt{T}\|\phi\|^2_{L^2[0,T]} \|G-G_n\|_{L^2[0,T]} .
\end{align*}
Since $ \|G-G_n\|_{L^2[0,T]}\to0$ by dominated convergence, we conclude that $J_0^{(n)}[\phi]\to J_0[\phi]$ uniformly in functions $\phi$ from any bounded subset of $ L^2[0,T]$.

 For each $n$, let $\phi_n$ be the minimizer in $\Phi_1$ of the energy functional $J^{(n)}_\gamma$. By Section~\ref{exponentialkernels}, each function $\phi_n$ is symmetrically totally monotone. Since the function $f\equiv 1/T$ belongs to $\Phi_1$, one sees that there exists a constant $C$ such that $\|\phi_n\|_{L^2[0,T]}\le C$. By passing to a subsequence if necessary, we may therefore assume without loss of generality that the sequence $(\phi_n)_{n\in\mathbb{N}}$ converges weakly in $L^2[0,T]$ to a limiting function $\tilde \phi$, which by Lemma~\ref{weakly closed lemma} admits a symmetrically totally monotone version. Let $\phi$ be the minimizer of $J_\gamma$. Then $J^{(n)}_\gamma[\phi]\ge J_\gamma^{(n)}[\phi_n]$ for each $n$. Hence, the uniform convergence of $J_\gamma^{(n)}$ yields that 
\begin{align*}
J_\gamma[\phi]&=\lim_{n\uparrow\infty}J^{(n)}_\gamma[\phi]\ge \liminf_{n\uparrow\infty}J^{(n)}_\gamma[\phi_n]=\liminf_{n\uparrow\infty}J_\gamma[\phi_n]\ge J_\gamma[\tilde\phi],
\end{align*}
where the latter inequality follows from the weak lower semicontinuity of $J_\gamma$. This shows that $\phi=\tilde\phi$ and concludes the proof for $G(0+)<\infty$.

If $G$ is weakly singular and satisfies $G(0+)=\infty$, we use its approximation as in \eqref{eq:etan} by kernels $G_n$ with $G_n(0+)<\infty$. As in the final part of the proof of Proposition~\ref{GSS prop} one sees that the symmetrically totally monotone minimizers for $G_n$ converge weakly in $L^2[0,T]$ to the minimizer for $G$. Thus, this latter minimizer is also symmetrically totally monotone by Lemma~\ref{weakly closed lemma}. 

\subsubsection{Proof of Theorem~\ref{th:completemonotonicity} for  $\bm{\gamma=0}$}

Let  $\mu^*$ be the minimizer of $J_0$ as provided by Theorem 2.24 of \cite{Gatheral2012}. We approximate  $\mu^*$ in the weak topology by probability measures of the form $\mu^*_n(dx)=\psi_n(x)\dif x$, where each $\psi_n$ is a bounded nonnegative function on $[0,T]$ satisfying $\int_0^T\psi_n(x)\dif x=1$. Then we choose a sequence $\gamma_n\downarrow0$ that is such that $\gamma_n\int_0^T\psi_n(x)^2\dif x\to0$. Then it follows from \eqref{eq: Jgamma Fourier rep} that $J_{\gamma_n}[\psi_n]\to J_0[\mu^*]$.

Next, we let $\phi_n$ be the minimizer of $J_{\gamma_n}$ in $\Phi_{1}$. By passing to a subsequence if necessary, we may assume that the probability measures $\mu_n(dx)=\phi_n(x)\dif x$ on $[0,T]$ converge weakly to a probability measure $\mu$ on $[0,T]$. By Lemma~\ref{weak convergence lemma}, the restriction of $\mu$ to $(0,T)$ is absolutely continuous with respect to the Lebesgue measure and admits a symmetrically totally monotone density. Finally, we claim that $\mu=\mu^*$. Indeed, by \eqref{eq: J0 Fourier rep} and Fatou's lemma, $J_0$ is lower semicontinuous with respect to weak convergence of measures, and hence
\begin{align*}
J_0[\mu]&\le\liminf_{n\uparrow\infty}J_0[\mu_n]\le \liminf_{n\uparrow\infty}J_{\gamma_n}[\phi_n]\le \liminf_{n\uparrow\infty}J_{\gamma_n}[\psi_n]=J_0[\mu^*],
\end{align*}
and so the uniqueness of the minimizer yields $\mu=\mu^*$.\qed

\subsection{Proof of the formula from Example~\ref{capped linear example}}\label{capped linear section}

To prove the representation \eqref{capped linear formula}
, note first that 
$$
		\int_0^{n} \big(1 - |t-s| \big)^+ \phi(s) \dif s = 
		\begin{cases} 
			\int_0^t (1-t+s) \phi(s) \dif s + \int_t^{t+1} (1+t-s) \phi(s) \dif s, 
				& t \in [0,1], \\ 
			\int_{t-1}^t (1-t+s) \phi(s) \dif s + \int_t^{t+1} (1+t-s) \phi(s) \dif s, 
				& t \in [1,n-1], \\ 
			\int_{t-1}^t (1-t+s) \phi(s) \dif s + \int_t^{n} (1+t-s) \phi(s) \dif s, 
				& t \in [n-1,n].
		\end{cases}
$$
Differentiating this identity twice and replacing $\phi$ with $\phi_1, \dots, \phi_n$ yields
\begin{align*}
	\gamma \phi''_1(t) &= 2 \phi_1(t) - \phi_2(t), \\
	\gamma \phi''_i(t) &= 2 \phi_i(t) - \phi_{i-1}(t) - \phi_{i+1}(t), \qquad i = 2, \dots, n-1, \\
	\gamma \phi''_n(t) &= 2 \phi_n(t) - \phi_{n-1}(t).
\end{align*}

Hence $f \coloneqq (\phi_1, \dots, \phi_n)$ solves the following $n$-dimensional system of ordinary differential equations on $[0,1]:$
$$
	f'' = \frac1\gamma
	\begin{pmatrix} 
		2 & -1 & \dots & 0 & 0\\ 
		-1 & 2 & \dots & 0 & 0 \\ 
		\vdots & \vdots & \vdots & \ddots & \vdots \\ 
		0 & 0 & \dots & 2 & -1 \\
		0 & 0 & \dots & -1 & 2 \end{pmatrix} f.
$$
Let $M_n$ denote the preceding triangular matrix, denote by $\lambda_1, \dots, \lambda_n$ its eigenvalues, and let  $Q$ contain the corresponding eigenvectors as columns.
Then
$$
	f(t) = Q \big( E(t) x_0 + E(1-t) x_1 \big)
$$
for some vectors $x_0, x_1 \in \mathbb{R}^n.$

Define $m \coloneqq \lceil n/2 \rceil.$
Let $I_m, J_m, 0_m$ denote the $m$-dimensional identity matrix, reverse identity matrix, and zero matrix, respectively.
The symmetry of $\phi$ implies that $\phi_i(t) = \phi_{n+1-i}(1-t)$, and so
\begin{equation}
\label{eq:cappedlinear_symmetry}
	\begin{bmatrix} I_m & 0_m \end{bmatrix} Q \big( E(t) x_0 + E(1-t) x_1 \big)
	= \begin{bmatrix} 0_m & J_m \end{bmatrix} Q \big( E(1-t) x_0 + E(t) x_1 \big),
	\quad t \in [0,T].
\end{equation}
Since
$$
	\sin\Big( \frac{(n+1-i) j \pi}{n+1} \Big)
	= \sin(j \pi) \cos\Big( \frac{ij\pi}{n+1} \Big) -  \cos(j \pi) \sin\Big( \frac{ij\pi}{n+1} \Big)
	= (-1)^{j+1} \sin\Big( \frac{ij\pi}{n+1} \Big)
$$
for all $i \in \{1, \dots, m\}$ and $j \in \{1, \dots, n\},$ it holds that $\begin{bmatrix} 0_m & J_m \end{bmatrix} Q = \begin{bmatrix} I_m & 0_m \end{bmatrix} Q J.$
Notice that $J^{-1} = J.$
Hence \eqref{eq:cappedlinear_symmetry} is satisfied if and only if $x_1 = J x_0.$

Let $t = i \in \{1, \dots, n-1\}.$
Then the symmetry of $\phi$ shows that
\begin{align*}
	\sigma &= \gamma \phi(i) + \int_{i-1}^i (1-i+s) \phi(s) \dif s + \int_i^{i+1} (1+i-s) \phi(s) \dif s \\
	&= \gamma \phi_i(1) + \int_{i-1}^i (1-i+s) \phi_i(s-i+1) \dif s + \int_i^{i+1} (1+i-s) \phi_{n-i}(1+i-s) \dif s \\
	&= \gamma \phi_i(1) + \int_0^1 s \big( \phi_i(s) + \phi_{n-i}(s) \big) \dif s.
\end{align*}
Similar arguments yield $\sigma = \gamma \phi_n(1) + \int_0^1 s \phi_n(s) \dif s.$

A straightforward calculation shows
$
	\int_0^1 s f(s) \dif s = Q \big( (E(1)-I)(J-I) + B(E(1)-J)\big) B^{-2}.
$

Hence
\begin{equation}
\label{eq:cappedlinear_x0}
	 {\bm\sigma} = \Big(\gamma Q \big( E(1) + J \big) + K Q \big( (E(1)-I)(J-I) + B(E(1)-J)\big) B^{-2} \Big) x_0.
\end{equation}
Existence and uniqueness of a solution   to \eqref{eq:cappedlinear} imply that \eqref{eq:cappedlinear_x0} can be uniquely solved for $x_0$.

\bibliography{Literature.bib}{}

\end{document}